\newtheorem{theorem}{Theorem}
\newtheorem{corollary}[theorem]{Corollary}
\newtheorem{lemma}[theorem]{Lemma}
\newtheorem{proposition}[theorem]{Proposition}
\newtheorem{remark}[theorem]{Remark}
\newenvironment{proof}[1][Proof]{\noindent\textbf{#1.} }{\ \rule{0.5em}{0.5em}}
\begin{document}

\title{Global bifurcation of planar and spatial periodic solutions in the
restricted n-body problem}
\author{C. Garc\'{\i}a-Azpeitia
\and J. Ize\\{\small Depto. Matem\'{a}ticas y Mec\'{a}nica, IIMAS-UNAM, FENOMEC, }\\{\small Apdo. Postal 20-726, 01000 M\'{e}xico D.F. }\\{\small cgazpe@hotmail.com}}
\maketitle

\begin{abstract}
The paper deals with the study of a satellite attracted by n primary bodies,
which form a relative equilibrium. We use orthogonal degree to prove global
bifurcation of planar and spatial periodic solutions from the equilibria of
the satellite. In particular, we analyze the restricted three body problem and
the problem of a satellite attracted by the Maxwell's ring relative equilibrium.

Keywords: Global bifurcation, Orthogonal degree, Restricted n-body problem, Ring configuration.

\end{abstract}

\section{Introduction}

The restricted $n$-body problem is the study of the movement of a satellite
attracted by $n$ primary bodies which are rotating, at a constant angular
speed, around an axis. Since the mass of the satellite is small, one assumes
that the satellite does not perturb the trajectories of the primaries. We
shall suppose that these trajectories form a relative equilibrium and, as
such, are in a plane, let us say the $(x,y)$-plane. In this paper, the
primaries are assumed to be point masses or, equivalently, homogeneous spheres.

The purpose of this paper is to prove the existence of a global bifurcation of
periodic solutions for the satellite, starting from the relative equilibria of
the satellite. These solutions will form a continuum in the plane of the
primaries and we shall also prove that there are other global branches of
solutions out of that plane. The proof is based on the use of a topological
degree for maps that commute with some symmetries and are orthogonal to the
infinitesimal generators for these symmetries. We give results for a general
situation and applications to some special cases such as the restricted three
body problem and the Maxwell's Saturn ring, that is when there are $n$
primaries, of the same mass, forming a regular polygon, and a central larger
mass, as a classical model for Saturn and one ring around it. However, for the
general result the primaries may have different masses and may be located at
any relative equilibrium.

The study of relative equilibria for the restricted $n$-body setting is a
classical problem and there is a vast literature for it. For instance, in the
case of the restricted three-body problem, the local bifurcation of planar
periodic orbits from the Lagrange points is well known (see \cite{Ma90},
\cite{MeHa91}). There is a huge number of numerical explorations for this
restricted three-body problem, under a variety of hypotheses, such as the
bifurcation near $L_{4}$, where the mass of the primary is the bifurcation
parameter above Routh's number, \cite{Ba02}, \cite{Si10}, with a period
doubling cascade. \cite{SEE00} has a study of the phase space for solutions
near $L_{4}$ and \cite{EF09} treats the elliptic case where one has four
periods for solutions close to $L_{4}$. The stability of the orbits close to
$L_{4}$ is studied in \cite{Ef05} and the connection from $E_{3}$ to $L_{4}$
is explored in \cite{Pi09}. A very complete numerical study, \cite{DoRo07},
using AUTO, shows the many different types of periodic orbits and the
connections between the Lagrange points and also the secondary bifurcations
along the curves in the ${x,y,\mu}$ space, where $\mu$ is the mass of one of
the primaries. From a very applied point of view, one may cite \cite{KaGuKo05}
and \cite{GoLlSi00}.

In the case of the Maxwell ring, besides the theoretical results of
\cite{SiMo71} and \cite{Me99}, one has also many numerical results, such as
\cite{Pi05}, where the author studies numerically some families of solutions
around the central body and around the ring for a low number of peripherals,
with a theoretical approximation for the case of a satellite far from the
ring. A theoretical study, with averaging techniques is given in \cite{LS11}
for orbits far from the set of primaries (comets) and close to one of the
primaries (Hill solutions). Similarly \cite{MK07} proposes a regularization
for collision orbits. Closer to the spirit of the present paper, we mention
some of the more recent papers in the bibliography, in particular
\cite{ArEl04}, \cite{BaEl04}, \cite{Ka08} and \cite{BBS08}, where a numerical
classification of the different types of orbits is done.

The paper which is closer to ours, in the sense that is based on topological
arguments similar to ours and giving global results for the possible
connections between the relative equilibria is \cite{MaRy04}, for the
restricted three-body problem.

A final introductory comment, about topological methods, in particular in
bifurcation problems, may be useful: the degree arguments, coupled with group
representation ideas, give global information, i.e., an indication of where
the bifurcation branches could go. Also, since the results are valid for
problems which are deformation of the original problem, the method does not
require high order computations and they may be applied in some degenerate
cases (for instance it is not necessary that the bifurcation parameter crosses
a critical value with non-zero speed; it enough that it crosses eventually).
However, knowledge of some generic property, like a Morse condition, implies
an easy application of the argument. This may be not the case for problems
with more parameters (see however \cite{I95}). An immediate drawback of this
approach is that topological methods do not provide a detailed information on
the local behavior of the bifurcating branch, such as stability or the
existence of other type of solutions, like KAM tori. Other methods, such as
normal forms or special coordinates, should be used for these purposes, but
they only provide local information near the critical point. In a similar way,
our degree arguments give only partial results on resonances and other tools
should be used. Topological methods provide an interesting complement of information.

\section{Setting the problem}

Newton's equations describing the movement of a satellite, in rotating
coordinates and with angular speed equal to $1$, are%
\begin{align*}
\ddot{x}+2\bar{J}\dot{x} & =\nabla V(x)\text{ with }\\
V(x) & :=\frac{\left\Vert \bar{I}x\right\Vert ^{2}}{2}+\sum_{j=1}^{n}%
m_{j}\phi_{\alpha}(\left\Vert x-(a_{j},0)\right\Vert )\text{,}%
\end{align*}
where $x\in\mathbb{R}^{3}$ is the position of the satellite and $(a_{j},0)$ is
the position of a primary body with mass $m_{j}$. The function $\phi_{\alpha}$
represents the attraction between the bodies, where we suppose that
$\phi_{\alpha}^{\prime}=-1/x^{\alpha}$, and we include the gravitational
potential for $\alpha=2$. The matrices $\bar{I}$ and $\bar{J}$ are defined by
\[
\bar{I}=\mathrm{diag}(I,0)\text{ and }\bar{J}=\mathrm{diag}(J,0)\text{,}%
\]
where $J$ and $I$ are the symplectic and identity $2\times2$ matrices.

Here we assume that the primary bodies form a relative equilibrium. Because of
the homogeneity of the potential, we may rescale the space so that the angular
velocity is $1$. As all relative equilibria are planar for the $n$-body
problem, thus the positions of the primary bodies $a_{j}\in\mathbb{R}^{2}$
must satisfy the relation%
\[
a_{i}=\sum_{j=1~(j\neq i)}^{n}m_{j}\frac{a_{i}-a_{j}}{\left\Vert a_{i}%
-a_{j}\right\Vert ^{\alpha+1}}\text{.}%
\]

The equilibria of the satellite are just the critical points of the potential
$V$. From the potential we can prove that all equilibria are planar. Now, we
wish to find the Hessian of the potential at a planar equilibrium.

\begin{proposition}
Let $d_{j}$ be the distance between $x_{0}=(x,y,0)$ and the primary body
$(a_{j},0)=(x_{j},y_{j},0)$. The Hessian matrix of the potential is
\[
D^{2}V(x_{0})=\left( I+\sum_{j=1}^{n}m_{j}A_{j},-\sum_{j=1}^{n}m_{j}%
/d_{j}^{\alpha+1}\right) \text{,}%
\]
where the matrices $A_{j}$ are defined by%
\begin{equation}
A_{j}=\frac{(\alpha+1)}{d_{j}^{\alpha+3}}\left(
\begin{array}
[c]{cc}%
(x-x_{j})^{2} & (x-x_{j})(y-y_{j})\\
(x-x_{j})(y-y_{j}) & (y-y_{j})^{2}%
\end{array}
\right) -\frac{I}{d_{j}^{\alpha+1}}\text{.} \label{Eq101}%
\end{equation}

\end{proposition}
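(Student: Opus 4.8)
The plan is to compute $D^{2}V$ by differentiating $V$ term by term and then restricting to a planar point. Write the satellite position as $x=(x_{1},x_{2},x_{3})$, so that the quadratic part is $\Vert\bar{I}x\Vert^{2}/2=(x_{1}^{2}+x_{2}^{2})/2$, whose Hessian is the constant matrix $\mathrm{diag}(I,0)=\bar{I}$. This already accounts for the summand $I$ in the upper-left block and contributes nothing in the $x_{3}$-direction, so it remains to treat the attraction terms.

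Next I would handle a single term $g_{j}(x):=\phi_{\alpha}(\Vert x-p_{j}\Vert)$, where $p_{j}=(a_{j},0)\in\mathbb{R}^{3}$; put $r=\Vert x-p_{j}\Vert$. Using $\phi_{\alpha}^{\prime}(r)=-r^{-\alpha}$ and the chain rule one gets $\nabla g_{j}=-(x-p_{j})/r^{\alpha+1}$, and differentiating once more,
\[
D^{2}g_{j}=-\frac{1}{r^{\alpha+1}}I_{3}+\frac{\alpha+1}{r^{\alpha+3}}(x-p_{j})(x-p_{j})^{T},
\]
where I keep the $3\times3$ identity $I_{3}$ in order to track the vertical direction. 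Evaluating at $x_{0}=(x,y,0)$, one has $r=d_{j}$ and $x_{0}-p_{j}=(x-x_{j},y-y_{j},0)$, so the rank-one matrix $(x_{0}-p_{j})(x_{0}-p_{j})^{T}$ has its entire third row and column equal to zero. Hence $D^{2}g_{j}(x_{0})$ is block diagonal, with upper-left $2\times2$ block equal to the matrix $A_{j}$ of (\ref{Eq101}) and lower-right entry $-1/d_{j}^{\alpha+1}$. Multiplying by $m_{j}$, summing over $j$, and adding the contribution $\bar{I}$ of the quadratic part gives $D^{2}V(x_{0})=\mathrm{diag}\bigl(I+\sum_{j}m_{j}A_{j},\,-\sum_{j}m_{j}/d_{j}^{\alpha+1}\bigr)$, which is the asserted formula (the pair $(\cdot,\cdot)$ denoting this block-diagonal matrix).

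The computation is elementary, so there is no genuine obstacle; the only point worth emphasizing is the decoupling of the planar and vertical directions at $x_{0}$. This is simply the observation that $g_{j}$ depends on $x=(x_{1},x_{2},x_{3})$ only through $r^{2}=(x_{1}-x_{j})^{2}+(x_{2}-y_{j})^{2}+x_{3}^{2}$, so every mixed second derivative $\partial^{2}g_{j}/\partial x_{3}\partial x_{i}$ with $i=1,2$ carries a factor $x_{3}$ and therefore vanishes on $\{x_{3}=0\}$. The same remark applied to $\nabla V$ is exactly what forces every equilibrium of the satellite to lie in the plane of the primaries, as was used just before the statement.
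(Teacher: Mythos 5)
Your computation is correct and follows essentially the same route as the paper: differentiate $\phi_{\alpha}(\Vert x-p_{j}\Vert)$ twice to obtain $D^{2}\phi_{\alpha}(d_{j})=\mathrm{diag}(A_{j},-1/d_{j}^{\alpha+1})$ at a planar point and add the Hessian $\bar{I}$ of the quadratic term. You merely spell out the chain-rule steps and the vanishing of the mixed $x_{3}$-derivatives that the paper leaves implicit.
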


\begin{proof}
Since the function $\phi_{\alpha}(d_{j})$ has Hessian
\[
D^{2}\phi_{\alpha}(d_{j})=\frac{\alpha+1}{d_{j}^{\alpha+3}}\left(
\begin{array}
[c]{ccc}%
(x-x_{j})^{2} & (x-x_{j})(y-y_{j}) & 0\\
(x-x_{j})(y-y_{j}) & (y-y_{j})^{2} & 0\\
0 & 0 & 0
\end{array}
\right) -\frac{I}{d_{j}^{\alpha+1}}\text{,}%
\]
hence $D^{2}\phi_{\alpha}(d_{j})=diag(A_{j},-1/d_{j}^{\alpha+1})$. From this
fact we get the Hessian of $V$.
\end{proof}

Now we want to estimate the number of equilibria provided that the potential
is a Morse function, which is more than a reasonable condition. This is a
generic condition, which is met in our applications, but which could not hold
in some cases. As a matter of fact, we only need that the critical points
should be isolated. Because all equilibria are in the plane, we may restrict
the potential to planar points.

\begin{proposition}
Let us assume that the potential of the satellite is in the plane with
$\alpha\in\lbrack1,\infty)$. Then the potential does not have maximum points.
In addition, if the potential is a Morse function, then
\[
\text{\#saddle points}=n-1+\text{\#minimum points.}%
\]
Moreover, since the potential has a global minimum, there are at least $n$
saddle points.
\end{proposition}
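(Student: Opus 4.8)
The plan is to use Morse theory, specifically the Morse inequalities or the Poincar\'{e}--Hopf index theorem, applied to the potential $V$ restricted to the plane $\{(x,y,0)\}$ with the $n$ primaries removed as punctures. First I would establish the statement that there are no maximum points: at a planar critical point $x_0$, the in-plane Hessian is $I+\sum_j m_j A_j$, and the trace of each $A_j$ is $\operatorname{tr}A_j = (\alpha+1)d_j^{-(\alpha+1)} - 2d_j^{-(\alpha+1)} = (\alpha-1)d_j^{-(\alpha+1)} \ge 0$ for $\alpha\ge 1$; hence $\operatorname{tr}(D^2V|_{\text{plane}}) = 2 + \sum_j m_j\operatorname{tr}A_j > 0$, so the two in-plane eigenvalues cannot both be negative, ruling out a local maximum. (One should also note the third eigenvalue $-\sum_j m_j/d_j^{\alpha+1}<0$, which is why spatially the equilibria are saddles, but for the count we work in the plane.)

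Next, the core of the argument: treat $V$ as a Morse function on the open planar domain $\Omega = \mathbb{R}^2 \setminus \{a_1,\dots,a_n\}$. Near each primary $a_j$ the potential behaves like $m_j\phi_\alpha(d_j)\to -\infty$ (since $\phi_\alpha' = -1/x^\alpha$ is negative, $\phi_\alpha$ decreases toward the singularity, so $V\to-\infty$ as $x\to a_j$); and as $\|x\|\to\infty$ the term $\|Ix\|^2/2\to+\infty$ dominates. So $V$ is a proper-type function with $n$ negative punctures and is coercive at infinity, meaning all critical points lie in a compact region and gradient flow is well-controlled. The key topological input is the Euler characteristic of $\Omega$: a plane with $n$ points removed is homotopy equivalent to a wedge of $n$ circles, so $\chi(\Omega) = 1-n$. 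The Poincar\'{e}--Hopf / Morse equality for a coercive Morse function on such a domain (where the gradient points inward on large spheres and the behavior near each puncture contributes an outward-pointing flow) gives
\[
\chi(\Omega) = \#\text{minima} - \#\text{saddles},
\]
since there are no maxima; here I am using that each index-$0$ critical point contributes $+1$, each index-$1$ (saddle) contributes $-1$, and the punctures and the boundary at infinity contribute nothing extra to the count once one checks the flow directions. Rearranging, $\#\text{saddles} = \#\text{minima} - \chi(\Omega) = \#\text{minima} + n - 1$.

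Finally, since $V$ is coercive (tends to $+\infty$ at infinity) and bounded below away from the punctures, it attains a global minimum, so $\#\text{minima}\ge 1$, whence $\#\text{saddles}\ge n$. The main obstacle I anticipate is making the Morse-theoretic count rigorous on the \emph{open} manifold $\Omega$ with its non-compact ends: one must verify carefully the behavior of $\nabla V$ near each singularity $a_j$ and at infinity to justify that these ends contribute nothing (or the right thing) to the Euler-characteristic count --- equivalently, one can compactify by adding a large circle at infinity (on which $\nabla V$ points strictly inward, contributing as a boundary with no interior critical points) and small circles around each $a_j$ (on which $\nabla V$ points strictly outward toward the puncture), and then invoke the Poincar\'{e}--Hopf theorem for manifolds with boundary with the appropriate sign conventions. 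Checking these flow directions is the one genuinely delicate step; the rest is bookkeeping with $\chi = 1-n$ and the no-maxima fact already established.
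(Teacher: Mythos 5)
Your overall strategy is exactly the paper's: the trace computation $\operatorname{tr}(I+\sum_j m_j A_j)=2+(\alpha-1)\sum_j m_j/d_j^{\alpha+1}>0$ to exclude maxima, then a Poincar\'{e}--Hopf count on the plane with the $n$ primaries removed (the paper works on $\Omega$ = a large disk minus small disks around the $a_j$, whose Euler characteristic is $1-n$), Morse indices $+1$ for minima and $-1$ for saddles, and finally the global minimum to get at least $n$ saddles. However, the step you yourself single out as the delicate one --- the behavior of $V$ at the punctures and at infinity --- is done with the wrong sign, and this is a genuine error. Since $\phi_\alpha'(d)=-1/d^{\alpha}<0$, the function $\phi_\alpha$ is \emph{decreasing in} $d$, hence it \emph{increases} as $d\to 0$: for $\alpha=2$ one has $\phi_2(d)=1/d\to+\infty$, and for $\alpha=1$, $\phi_1(d)=-\log d\to+\infty$. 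So $V\to+\infty$ both as $x\to a_j$ and as $\Vert x\Vert\to\infty$ (this is exactly what the paper uses), not $V\to-\infty$ at the primaries as you claim. Correspondingly, $\nabla V$ points \emph{outward} from $\Omega$ on the whole boundary: radially outward on the large circle (you wrote ``inward'', which contradicts coercivity) and toward $a_j$ on the small circles (which you assert, but which is inconsistent with your own claim that $V\to-\infty$ there). With the outward condition on all of $\partial\Omega$, Poincar\'{e}--Hopf gives directly $\deg_\Omega\nabla V=\chi(\Omega)=1-n$, and the count follows.

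The sign error is not merely cosmetic. First, your justification of ``the ends contribute nothing'' rests on flow directions that are mutually inconsistent; it only happens that in this two-dimensional situation the boundary components are circles, with zero Euler characteristic, so a mixed inward/outward convention would not change the number $1-n$ --- but that is luck, not the argument you gave. Second, and more seriously, the final step needs $V$ to attain a global minimum on the punctured plane; this follows from positivity of $V$ together with $V\to+\infty$ at the primaries and at infinity. With your stated behavior $V\to-\infty$ at the punctures, $V$ would be unbounded below on $\Omega$, there would be no global minimum, and indeed no guarantee of any minimum at all, so the conclusion ``at least $n$ saddle points'' would not follow. Once the sign of $\phi_\alpha$ near the singularities is corrected, your proof becomes precisely the paper's proof.
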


\begin{proof}
The potential in the plane has Hessian $D^{2}V(x_{0})=I+\sum_{j=1}^{n}%
m_{j}A_{j}$, and the trace of $D^{2}V(x_{0})$ is%
\begin{equation}
T=2+(\alpha-1)\sum_{j=1}^{n}\frac{m_{j}}{d_{j}^{\alpha+1}}\text{.}
\label{Eq102}%
\end{equation}
Consequently, the potential does not have maximum points as the trace is
positive for $\alpha\in\lbrack1,\infty)$. Moreover, we know that $V(x)$ is
positive and that $V(x)\rightarrow\infty$ as $x\rightarrow\{\infty
,a_{1},...,a_{n}\}$, then $V$ has at least a global minimum in $\Omega$. Since
the gradient of $V(x)$ is dominated by the identity, for large $\left\Vert
x\right\Vert $, the critical points are bounded.

Let us define the set $\Omega$ as a ball of radius $\rho$, minus small balls
of radii $\rho^{-1}$ with centers at $a_{j}$. Since the gradient $\nabla V$
points outward in $\partial\Omega$ provided $\rho$ is big enough, then by the
Poincar\'{e}--Hopf theorem the degree of $\nabla V(x)$ is equal to $1-n$.
Furthermore, since $V(x)$ is a Morse function, that is the critical points are
non-degenerate, then this degree is the sum of the local indices. Each of
these indices is the sign of the determinant of the Hessian matrix, that is
$1$ for a minimum and $-1$ for a saddle point. Then%
\[
1-n=\deg_{\Omega}\nabla V=\text{\#minimum points}-\text{\#saddle points }%
\]

\end{proof}

\section{Bifurcation theorem}

In order to explain our results, we may give a short description of the steps
to prove the bifurcation theorem.

We wish to remark that we follow the ideas from the book \cite{IzVi03}, where
more general bifurcation theorems are proven. In addition, in the thesis
\cite{Ga10} one may find a systematic application to different Hamiltonian
systems and situations.

\subsection{The bifurcation operator}

Our aim is to find bifurcation of periodic solutions from the equilibria of
the satellite. First, we make the change of variables from $t$ to $t/\nu$.
Hence, the $2\pi/\nu$-periodic solutions of the differential equations are the
$2\pi$-periodic solutions of%
\[
-\nu^{2}\ddot{x}-2\nu\bar{J}\dot{x}+\nabla V(x)=0\text{.}%
\]

Let $H_{2\pi}^{2}(\mathbb{R}^{n})$ be the Sobolev space of $2\pi$-periodic
functions, with the corresponding regularity. We define the collision points
as the set $\Psi=\{a_{1},...,a_{n}\}$ and the collision-free paths as the set
\[
H_{2\pi}^{2}(\mathbb{R}^{3}\backslash\Psi)=\{x\in H_{2\pi}^{2}(\mathbb{R}%
^{3}):x(t)\neq a_{j}\}.
\]
Recall that functions in this space are continuous. Hence, we define the
bifurcation operator $f:H_{2\pi}^{2}(\mathbb{R}^{3}\backslash\Psi
)\times\mathbb{R}^{+}\rightarrow L_{2\pi}^{2}$ as%
\[
f(x,\nu)=-\nu^{2}\ddot{x}-2\nu\bar{J}\dot{x}+\nabla V(x)\text{.}%
\]
In view of the definitions, the collision-free $2\pi$-periodic solutions are
zeros of the bifurcation operator $f(x,\nu)$. Furthermore, the operator $f$ is
well defined and continuous.

Now, we define the actions of the group $\mathbb{Z}_{2}\times S^{1}$ on
$H_{2\pi}^{2}(\mathbb{R}^{3}\backslash\Psi)$ as
\[
\rho(\kappa)x=Rx(t)\text{ and }\rho(\varphi)x=x(t+\varphi),
\]
where $R=\mathrm{diag}(1,1,-1)$ is just the reflection which fixes the plane. 

Since $V(x)$ is invariant with respect to the reflection, the gradient $\nabla
V$ is a $\mathbb{Z}_{2}$-equivariant map. Moreover, since the equation is
autonomous and $R$ commutes with the matrix $\bar{J}$, then
\[
f(\rho(\kappa,\varphi)x)=\rho(\kappa,\varphi)f(x)\text{.}%
\]
Therefore $f(x)$ is a $\mathbb{Z}_{2}\times S^{1}$-equivariant map.

Now, the generator of the group $S^{1}$ on the space $H_{2\pi}^{2}$ is
$D_{\varphi}(\rho(\varphi)x)_{\varphi=0}=\dot{x}$. As the operator $f(x)$
satisfies the equality%
\[
\left\langle f(x),\dot{x}\right\rangle _{L_{2\pi}^{2}}=(-\nu^{2}\left\vert
\dot{x}\right\vert ^{2}/2+V(x))|_{0}^{2\pi}=0\text{,}%
\]
then the operator $f(x)$ is orthogonal to the generator $\dot{x}$ in $L_{2\pi
}^{2}$. Given this condition we say that the operator $f(x)$ is a
$\mathbb{Z}_{2}\times S^{1}$-orthogonal map. The orthogonality corresponds to
the conservation of energy.

Finally, since all the equilibria are planar, the isotropy subgroup of an
equilibrium $x_{0}$ is $\mathbb{Z}_{2}\times S^{1}$. This means that all
equilibria are fixed by the action of $\mathbb{Z}_{2}\times S^{1}$.

\subsection{The Lyapunov-Schmidt reduction}

We want to use the orthogonal degree in order to prove bifurcation, but since
this degree is defined only in finite dimensions, we need to reduce the
bifurcation operator to finite dimensions. To achieve this, let us set the
Fourier series of the bifurcation operator as
\[
f(x)=\sum_{l\in\mathbb{Z}}\left( l^{2}\nu^{2}x_{l}-2il\nu\bar{J}x_{l}%
+g_{l}\right) e^{ilt}\text{,}%
\]
where $x_{l}$ and $g_{l}$ are the Fourier modes of $x$ and $\nabla V(x)$,
respectively. Since $l^{2}\nu^{2}I-2il\nu\bar{J}$ is invertible for all $l$'s
except a finite number, we can make a Lyapunov-Schmidt reduction to a finite
space. In fact, we perform a global reduction, using the global implicit
function theorem, with the right bounds taking care of the collision points
$\Psi$.

In that way, we get that the zeros of the bifurcation operator are the zeros
of the bifurcation function
\[
f(x_{1},x_{2}(x_{1},\nu),\nu)=\sum_{\left\vert l\right\vert \leq p}(l^{2}%
\nu^{2}x_{l}-2il\nu\bar{J}x_{l}+g_{l})e^{ilt}\text{,}%
\]
where $x_{1}$ corresponds to the $2p+1$ modes and $x_{2}$ to the complement.

Consequently, the linearized bifurcation function at an equilibrium $x_{0}$ is%
\[
f^{\prime}(x_{0},\nu)x_{1}=\sum_{\left\vert l\right\vert \leq p}\left(
l^{2}\nu^{2}I-2il\nu\bar{J}+D^{2}V(x_{0})\right) x_{l}e^{ilt}\text{.}%
\]

In fact, $\nabla V(x)= D^{2}V(x_{0})(x-x_{0}) +...$, close to $x_{0}$ and the
Fourier components of $x-x_{0}$ are $x_{l}$ for $l\neq0$ and we rename the
stationary mode as $x_{0}$.

So the linearized bifurcation equation has blocks $M(l\nu)$ for $l\in
\{0,...,p\}$, where the block $M(\lambda)$ is%
\[
M(\lambda)=\lambda^{2}I-2i\lambda\bar{J}+D^{2}V(x_{0})\text{.}%
\]

\subsection{Irreducible representations}

In the following part, we analyze the symmetries of the group $\mathbb{Z}%
_{2}\times S^{1}$. Since the action of $(\kappa,\varphi)\in\mathbb{Z}%
_{2}\times S^{1}$ on Fourier modes $e^{ilt}x_{l}$ is
\[
\rho(\kappa,\varphi)(e^{ilt}x_{l})=Re^{il\varphi}(e^{ilt}x_{l}),
\]
then the action on the block $M(l\nu)$ is given by $\rho(\kappa,\varphi
)x_{l}=Re^{il\varphi}x_{l}$.

Now, as the action of $\mathbb{Z}_{2}$ on $\mathbb{C}^{3}$ is $\rho
(\kappa)=diag(1,1,-1)$, the space $\mathbb{C}^{3}$ has two irreducible
representations: $V_{0}=\mathbb{C}^{2}\times\{0\}$ and $V_{1}=\{0\}\times
\mathbb{C}$. That is, the group $\mathbb{Z}_{2}$ acts on $V_{0}$ as
$\rho(\kappa)=1$ and on $V_{1}$ as $\rho(\kappa)=-1$. Hence, by Schur's lemma
we know that the matrix $M(\lambda)$ must satisfy
\[
M(\lambda)=\mathrm{diag}(M_{0}(\lambda),M_{1}(\lambda)).
\]
Actually, from the explicit Hessian $D^{2}V(x_{0})$ we have%
\begin{align}
M_{1}(\lambda) & =\lambda^{2}-\sum_{j=1}^{n}m_{j}/d_{j}^{\alpha+1}\text{
and}\label{Eq201}\\
M_{0}(\lambda) & =\lambda^{2}I-2iJ\lambda+\left( I+\sum_{j=1}^{n}m_{j}%
A_{j}\right) \text{.}\nonumber
\end{align}

Consequently, the action of the group $\mathbb{Z}_{2}\times S^{1}$ on the
block $M_{0}(\nu)$ is $(\kappa,\varphi)x=e^{i\varphi}x$. Therefore the element
$(\kappa,0)$ leaves fixed the points for $M_{0}(\nu)$, so the isotropy
subgroup for $M_{0}(\nu)$ is the one generated by $(\kappa,0)$,%
\[
\mathbb{Z}_{2}=\left\langle (\kappa,0)\right\rangle .
\]
For $M_{1}(\nu)$ the action of the group $\mathbb{Z}_{2}\times S^{1}$ is
$(\kappa,\varphi)x=-e^{i\varphi}x$. It follows that $(\kappa,\pi)$ leaves
fixed the points for $M_{1}(\nu)$, thus the isotropy subgroup for $M_{1}(\nu)$
is generated by $(\kappa,\pi)$,
\[
\mathbb{\tilde{Z}}_{2}=\left\langle (\kappa,\pi)\right\rangle .
\]

\subsection{The orthogonal degree}

The orthogonal degree is defined for orthogonal maps that are non-zero on the
boundary of some open bounded invariant set. The degree is made of integers,
one for each orbit type, and it has all the properties of the usual Brouwer
degree. Hence, if one of the integers is non-zero, then the map has a zero
corresponding to the orbit type of that integer. In addition, the degree is
invariant under orthogonal deformations that are non-zero on the boundary. The
degree has other properties such as sum, products and suspensions, for
instance, the degree of two pieces of the set is the sum of the degrees.

Now, if one has an isolated orbit, then its linearization at one point of the
orbit $x_{0}$ has a block diagonal structure, due to Schur's lemma, where the
isotropy subgroup of $x_{0}$ acts as $\mathbb{Z}_{n}$ or as $S^{1}$.
Therefore, the orthogonal index of the orbit is given by the signs of the
determinants of the submatrices where the action is as $\mathbb{Z}_{n}$, for
$n=1$ and $n=2$, and the Morse indices of the submatrices where the action is
as $S^{1}$. In particular, for problems with a parameter, if the orthogonal
index changes at some value of the parameter, one will have bifurcation of
solutions with the corresponding orbit type. Here, the parameter is the
frequency $\nu$.

Any Fourier mode will give rise to an orbit type (modes which are multiples of
it), hence one has an element of the orthogonal degree for each mode.
Furthermore, if $x(t)$ is a periodic solution, with frequency $\nu$, then
$y(t)= x(nt)$ is a $2\pi/n$-periodic solution, with frequency $\nu/n$. Hence,
any branch arising from the fundamental mode will be reproduced in the
harmonic branch. If one wishes to study period-doubling, then one has to
consider the branch corresponding to $\pi$-periodic solutions.

The complete study of the orthogonal degree theory is given in \cite{IzVi03}.

\begin{theorem}
Supposing that the matrix $M(0)=D^{2}V(x_{0})$ is invertible, we define
\begin{equation}
\eta_{k}(\lambda)=\sigma(n_{k}(\lambda-\rho)-n_{k}(\lambda+\rho))\text{,}%
\end{equation}
where $\sigma=\mathrm{sgn}(\det M_{0}(0))$ and $n_{k}(\lambda)$ is the Morse
index of $M_{k}(\lambda)$ for $k\in\{0,1\}$.

In general, if $x_{0}$ is an isolated critical point, then $\sigma$ is the
index of $\nabla V(x)$ at $x_{0}$.

If $\eta_{k}(\nu_{k})$ is nonzero, then the equilibrium has a global
bifurcation of periodic solutions starting from the period $2\pi/\nu_{k}$ with
isotropy group $G_{k}$.
\end{theorem}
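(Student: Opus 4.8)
The plan is to compute the orthogonal degree of the bifurcation function on a suitable invariant neighborhood of the equilibrium, and to show that whenever $\eta_k(\nu_k)\neq 0$ the degree has a nonzero component corresponding to the orbit type with isotropy $G_k$; the global bifurcation then follows from the standard properties of the orthogonal degree as developed in \cite{IzVi03}. First I would fix the equilibrium $x_0$ and, using the Lyapunov--Schmidt reduction already set up, work with the finite-dimensional bifurcation function whose linearization at $x_0$ splits into the blocks $M(l\nu)$, $l\in\{0,\dots,p\}$, each of which further decomposes by Schur's lemma as $\mathrm{diag}(M_0(l\nu),M_1(l\nu))$ according to the two irreducible $\mathbb{Z}_2$-representations $V_0$ and $V_1$. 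The stationary block $M(0)=D^2V(x_0)$ is assumed invertible, and its determinant sign $\sigma=\mathrm{sgn}(\det M_0(0))=\mathrm{sgn}(\det M_1(0))\cdot(\pm)$ is, by the Poincar\'e--Hopf argument of the previous proposition, exactly the Conley--Poincar\'e index of $\nabla V$ at $x_0$; this identifies the overall orientation factor.

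Next I would recall the explicit formula for the orthogonal index of an isolated stationary orbit: it is built from the signs of determinants of the blocks on which the residual isotropy acts trivially or as $\mathbb{Z}_2$, together with the Morse indices $n_k(\lambda)$ of the blocks on which the isotropy acts as $S^1$. Concretely, the component of the orthogonal degree attached to the Fourier mode generating the orbit type with isotropy $G_k$ is governed by the jump of the Morse index $n_k(\lambda)$ of $M_k(\lambda)$ as $\lambda$ crosses the candidate critical value. The key computation is then to show that, on an invariant sphere in the $(x_1,\nu)$-space chosen so that $\nu$ ranges over an interval $[\nu_k-\rho,\nu_k+\rho]$ containing no other critical frequency, the contribution of the $k$-th block to the orthogonal degree equals precisely $\sigma\bigl(n_k(\nu_k-\rho)-n_k(\nu_k+\rho)\bigr)=\eta_k(\nu_k)$. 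Here one uses the product/suspension properties of the degree to separate the stationary block (which supplies the factor $\sigma$) from the oscillating block (which supplies the Morse-index difference), exactly as in the abstract orthogonal bifurcation theorems of \cite{IzVi03}.

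Finally, once $\eta_k(\nu_k)\neq 0$ is established to force a nonzero component of the orthogonal degree with orbit type $G_k$, I would invoke the global alternative: the connected component of the closure of the set of nontrivial collision-free $2\pi$-periodic solutions containing $(x_0,\nu_k)$ is either unbounded in $H_{2\pi}^2(\mathbb{R}^3\setminus\Psi)\times\mathbb{R}^+$, or returns to the set of trivial solutions at another equilibrium/frequency, or approaches the collision set $\Psi$ or the boundary $\nu\to 0$ or $\nu\to\infty$. The global reduction via the implicit function theorem, performed with bounds controlling the collision points, guarantees that the degree is well defined on large invariant regions and hence that this alternative applies verbatim. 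I expect the main obstacle to be the bookkeeping in the second step: correctly matching each Fourier mode to its orbit type and isotropy group $G_k$, and verifying that the orthogonal index formula reduces to the clean expression $\eta_k(\lambda)=\sigma(n_k(\lambda-\rho)-n_k(\lambda+\rho))$ without spurious contributions from other modes or from the $l=0$ block — essentially checking that the only sign change on the chosen sphere comes from the designated block. The rest is a direct application of the machinery already cited.
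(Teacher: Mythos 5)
Your proposal follows essentially the same route as the paper: a Lyapunov--Schmidt reduction to the blocks $M_k(l\nu)$, identification of the jump of the orthogonal index at $\nu_k$ as $\sigma\bigl(n_k(\nu_k-\rho)-n_k(\nu_k+\rho)\bigr)$ (with the stationary block supplying $\sigma$, via a deformation/product argument when $x_0$ is only isolated), and then an appeal to the abstract global bifurcation theorem for orthogonal maps in \cite{IzVi03}. The bookkeeping you flag as the main obstacle is handled in the paper simply by noting that $\lambda=l\nu$, so the fundamental mode governs the jump and higher modes reproduce it at scaled frequencies; otherwise your argument matches the paper's proof.
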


\begin{proof}
Since $M_{1}(0)$ is a negative number, the sign of the determinant of $M(0)$
is the opposite of $\sigma$. Furthermore, there will be a change of the Morse
number only at values of $\lambda$ where $M_{1}(\lambda)$ is $0$ or where the
self-adjoint matrix $M_{0}(\lambda)$ has one of its two eigenvalues equal to
$0$ ( the other is not $0$, given that the trace is positive). Finally, since
$\lambda=l\nu$, what happens for the fundamental mode ($l=1)$ is reproduced
for higher modes and frequencies which are quotients of the fundamental
frequency by the mode $l$. Here we take the fundamental mode. One is then in
the position of applying Proposition 3.1, p.255 of \cite{IzVi03}, after one
sees the change of orthogonal index. Finally, if $x_{0}$ is an isolated
critical point, then one may perform an orthogonal deformation of the map to
$(\nabla V(x),M(l\nu)x_{l})$, for $l\in\{1,...,p\}$, near $(x_{0},\nu_{k})$,
with a jump at $\nu_{k}$ given by the above formula.
\end{proof}

We say that the bifurcation is \emph{non-admissible} when either: i) the
global branch goes to infinity in norm or period or ii) the branch ends in a
collision path. In any other case we say that the bifurcation is
\emph{admissible}. By global bifurcation we mean either that the bifurcation
is non-admissible or, if the bifurcation is admissible, that the bifurcation
branch returns to other bifurcation points and that the sum of the jumps of
the indices at the bifurcation points, $\eta_{k}(\nu_{k})$, is zero.

\section{Spectral analysis}

Now, we wish to find the bifurcation points of an equilibrium. In order to do
so, we need to analyze the spectrum of the blocks $M_{0}(\lambda)$ and
$M_{1}(\lambda)$. But let us first find the symmetries of the solutions that
bifurcate from these blocks.

For $M_{0}(\lambda)$ we get solutions with isotropy subgroup $\mathbb{Z}_{2}$.
As $\kappa\in\mathbb{Z}_{2}$ has action $\rho(\kappa)x_{0}(t)=Rx_{0}(t)$, this
means that the solutions with symmetry $\mathbb{Z}_{2}$ satisfies
$x_{0}(t)=Rx_{0}(t)$, i.e. $z(t)=0$. Therefore, solutions with symmetry
$\mathbb{Z}_{2}$ are just planar solutions.

For $M_{1}(\lambda)$ we get solutions with isotropy subgroup $\mathbb{\tilde
{Z}}_{2}$. As $(\kappa,\pi)\in\mathbb{\tilde{Z}}_{2}$ has action $\rho
((\kappa,\pi)x_{0}(t)=Rx_{0}(t+\pi)$, then the solutions with symmetry
$\mathbb{\tilde{Z}}_{2}$ satisfy $x_{0}(t)=Rx_{0}(t+\pi)$, i.e.%
\begin{equation}
x(t)=x(t+\pi)\text{, }y(t)=y(t+\pi)\text{ and }z(t)=-z(t+\pi).
\end{equation}
Since the projection of this solution on the $(x,y)$-plane is a $\pi$-periodic
curve, that solution follows twice this planar curve, one time with the
spatial coordinate $z(t)$ and a second time with $-z(t)$. Consequently, there
is at least one $t_{0}$ where $z(t_{0})=z(t_{0}+\pi)=0$. For instance, if only
one $t_{0}$ exists, then the solution looks like a spatial eight near the
equilibrium. For this reason, we will call eight-solutions the solutions with
isotropy subgroup $\mathbb{\tilde{Z}}_{2}$.

\begin{remark}
Actually, the solutions of the satellite are defined in rotating coordinates,
so that the periodic solutions are in general quasiperiodic in fixed coordinates.
\end{remark}

\subsection{Planar solutions}

Let $T$ and $D$ be the trace and determinant of the matrix $M_{0}(0)$. We
point out that the block $M_{0}(0)$ is just the Hessian of the planar
potential at the equilibrium point. In addition, in the first section we have
proven that the trace $T$ is always positive. Now, we want to show that the
bifurcation depends essentially on the sign of $D$.

\begin{proposition}
Let us define $\nu_{\pm}$ as
\[
\nu_{\pm}=\left( 2-T/2\pm\sqrt{(2-T/2)^{2}-D}\right) ^{1/2}.
\]

\begin{description}
\item[(a)] If $D<0$, then $x_{0}$ has a global bifurcation of periodic planar
solutions from $2\pi/\nu_{+}$ with%
\[
\eta_{0}(\nu_{+})=-1.
\]

\item[(b)] If $D>0$, $(2-T/2)^{2}>D$ and $T<4$, then $x_{0}$ has a global
bifurcation of periodic planar solutions from $2\pi/\nu_{+}$ and $2\pi/\nu
_{-}$ with%
\[
\eta_{0}(\nu_{+})=1\text{ and }\eta_{0}(\nu_{-})=-1.
\]

\end{description}
\end{proposition}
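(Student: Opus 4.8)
The plan is to read off the Morse index $n_0(\lambda)$ of the $2\times 2$ self-adjoint block $M_0(\lambda)=\lambda^2 I-2i\lambda J+(I+\sum m_j A_j)$ as $\lambda$ runs over $[0,\infty)$, and then apply Theorem~3.1 (the orthogonal degree bifurcation criterion). First I would note that the eigenvalues of $M_0(\lambda)$ can be tracked by its trace and determinant: since $\bar J$-type terms are antisymmetric, $M_0(\lambda)$ is Hermitian, and its two eigenvalues have sum $\operatorname{tr}M_0(\lambda)=2\lambda^2+T$ (always positive for $\lambda$ not too small, and in fact positive throughout since $T>0$) and product $\det M_0(\lambda)$. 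Hence a change in the Morse index $n_0$ occurs exactly at a zero of $\det M_0(\lambda)$, and there only one eigenvalue crosses zero (the other stays positive because the trace is positive). So the whole computation reduces to finding the positive roots $\lambda=\nu_\pm$ of $\det M_0(\lambda)=0$.

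The second step is the determinant computation. Writing $H=I+\sum m_j A_j$ for the planar Hessian with $\operatorname{tr}H=T$, $\det H=D$, one expands
\[
\det M_0(\lambda)=\det\!\left(\lambda^2 I-2i\lambda J+H\right).
\]
Using $\det(\lambda^2 I + H - 2i\lambda J)$ for a $2\times 2$ matrix, the $J$ contributes a term $-4\lambda^2$ (from $\det(-2i\lambda J)=-4\lambda^2$ combined with the cross terms; the cross term $-2i\lambda\,\operatorname{tr}(J\,\mathrm{adj}(\lambda^2I+H))$ vanishes because $J$ is traceless and $\mathrm{adj}$ of a symmetric matrix is symmetric). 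The upshot is the scalar quartic
\[
\det M_0(\lambda)=\lambda^4+(T-4)\lambda^2+D,
\]
whose roots in $\lambda^2$ are $\lambda^2=(2-T/2)\pm\sqrt{(2-T/2)^2-D}$, i.e. $\lambda^2=\nu_\pm^2$, matching the statement. Then I split into cases. \textbf{(a)} If $D<0$, the quartic has exactly one positive root $\lambda^2=\nu_+^2$ (the other is negative), so $n_0$ jumps by $\pm1$ at $\nu_+$ and by nothing else; combined with $\sigma=\operatorname{sgn}\det M_0(0)=\operatorname{sgn}D$ one computes $\eta_0(\nu_+)=\sigma\,(n_0(\nu_+-\rho)-n_0(\nu_++\rho))=-1$. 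Since the isotropy subgroup attached to the $M_0$ block is $\mathbb{Z}_2=\langle(\kappa,0)\rangle$, whose fixed-point set is exactly the planar solutions (shown in the ``Planar solutions'' preamble), Theorem~3.1 gives the asserted global bifurcation of planar periodic orbits from period $2\pi/\nu_+$. \textbf{(b)} If $D>0$, $(2-T/2)^2>D$ and $T<4$, both $\nu_\pm^2$ are positive and distinct, so $n_0$ changes at both $\nu_+$ and $\nu_-$; here $\sigma=+1$, and checking the direction of each eigenvalue crossing (the eigenvalue decreases through zero at the outer root $\nu_+$ and increases through zero at the inner root $\nu_-$, or vice versa depending on the sign of $d(\det M_0)/d(\lambda^2)$ at each root) yields $\eta_0(\nu_+)=+1$ and $\eta_0(\nu_-)=-1$. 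Again Theorem~3.1 converts the nonzero jumps into global planar bifurcation from both periods.

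The main obstacle I anticipate is bookkeeping the \emph{sign} of each Morse-index jump, i.e.\ determining which of the two eigenvalues of $M_0(\lambda)$ crosses zero at $\nu_\pm$ and in which direction, so as to get the signs $\eta_0(\nu_\pm)$ exactly right rather than just up to sign. This is handled by differentiating $\det M_0(\lambda)=\lambda^4+(T-4)\lambda^2+D$ in $s=\lambda^2$: $\frac{d}{ds}\det = 2s+(T-4)$, which is positive at the larger root $s=\nu_+^2$ and negative at the smaller root $s=\nu_-^2$ (under the case (b) hypotheses that force both roots positive). Since the zero-crossing eigenvalue inherits the sign behavior of the determinant (the other eigenvalue being bounded away from zero and positive), this pins down that the crossing eigenvalue goes from negative to positive at $\nu_-$ and from positive to negative at $\nu_+$ as $\lambda$ increases, giving the stated $\eta_0$ values; in case (a) only the $\nu_+$ crossing survives and $\sigma=-1$ flips the bare jump to $-1$. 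The rest — invertibility of $M(0)$ by hypothesis, the reduction of $\sigma$ to the Poincar\'e--Hopf index of $\nabla V$ at an isolated $x_0$, and the ``what happens at $l=1$ reproduces at higher $l$'' remark — is exactly as in Theorem~3.1 and needs only to be cited.
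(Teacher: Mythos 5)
Your proposal follows essentially the same route as the paper: reduce everything to the sign of $\det M_0(\lambda)$, use positivity of $\operatorname{tr}M_0(\lambda)=2\lambda^2+T$ to conclude that only one eigenvalue can cross zero, obtain the quartic $\det M_0(\lambda)=\lambda^4+(T-4)\lambda^2+D$ with roots $\lambda^2=\nu_\pm^2$, and read off the Morse-index jumps before applying the orthogonal-degree theorem. The paper computes the determinant by diagonalizing $M_0(0)=P^T\Lambda P$ with $P\in SO(2)$ commuting with $J$, whereas you use the expansion $\det(A+B)=\det A+\det B+\operatorname{tr}(\operatorname{adj}(A)B)$ with the traceless cross term; both give the same polynomial, so this is a cosmetic difference.

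There is, however, one sign slip in your case (b) that you should fix, precisely in the step you flagged as the delicate one. Your derivative computation is right: $\frac{d}{ds}\det=2s+(T-4)$ is negative at $s=\nu_-^2$ and positive at $s=\nu_+^2$. Since the crossing eigenvalue has the same sign as the determinant (the other eigenvalue staying positive), this means the determinant, and hence the crossing eigenvalue, passes from positive to negative at $\nu_-$ and from negative to positive at $\nu_+$; equivalently $n_0=0$ on $(0,\nu_-)$, $n_0=1$ on $(\nu_-,\nu_+)$, $n_0=0$ beyond $\nu_+$, which with $\sigma=1$ gives $\eta_0(\nu_-)=0-1=-1$ and $\eta_0(\nu_+)=1-0=+1$ as claimed. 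Your sentence asserts the opposite crossing directions (negative to positive at $\nu_-$, positive to negative at $\nu_+$), which contradicts both your own derivative computation and $n_0(0)=0$, and would yield $\eta_0(\nu_-)=+1$, $\eta_0(\nu_+)=-1$ if taken literally. The stated final values are the correct ones, so this is an inconsistency in the exposition rather than a wrong result; the paper avoids the issue by simply observing that the determinant is negative exactly on $(\nu_-,\nu_+)$ and positive outside. Note also that your case (b) tacitly uses $T>0$ to get $n_0(0)=0$; the paper records this as following from equation (2) for the specific potential (and separately treats $T<0$ for general potentials).
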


\begin{proof}
Since $M_{0}(0)$ is selfadjoint, there is an orthonormal matrix $P\in SO(2)$
such that $M_{0}(0)=P^{T}\Lambda P$, where $\Lambda$ is the eigenvalue matrix
$\mathrm{diag}(\lambda_{1},\lambda_{2})$. Since $M_{0}(\nu)=\nu^{2}%
I-2iJ\nu+M_{0}(0)$ and $J$ commutes with $P$, then%
\[
PM_{0}(\nu)P^{T}=\mathrm{diag}(\nu^{2}+\lambda_{1},\nu^{2}+\lambda_{2}%
)-2\nu(iJ)\text{.}%
\]
In view of $T=\lambda_{1}+\lambda_{2}$ and $D=\lambda_{1}\lambda_{2}$, the
determinant of $M_{0}(\nu)$ is%
\[
\det M_{0}(\nu)=\nu^{4}-2(2-T/2)\nu^{2}+D.
\]
It follows that the determinant has the factorization%
\[
\det M_{0}(\nu)=(\nu^{2}-\nu_{+}^{2})(\nu^{2}-\nu_{-}^{2}).
\]
Consequently, the Morse index of $M_{0}(\nu)$ can change only at $\pm\sqrt
{\nu_{\pm}}$.

For (a), only $\nu_{+}$ is positive, and $\sigma=\mathrm{sgn}(D)=-1$. The
Morse index of $M_{0}(0)$ is $n_{0}(0)=1$ due to $D<0$, and $n_{0}(\infty)=0 $
due to the fact that $M_{0}(\nu)$ has only positive eigenvalues for $\nu$ big
enough. Therefore $\eta_{0}(\nu_{+})=\sigma(1-0)=-1$.

For (b), both numbers $\nu_{\pm}$ are positive, and $\sigma=\mathrm{sgn}(D)=1
$. Moreover, we see that the determinant of $M_{0}(\nu)$ is negative between
$\nu_{-}$ and $\nu_{+}$, thus $n_{0}(\nu)=1$ for $\nu\in(\nu_{-},\nu_{+})$. As
the Morse index at infinity is $n_{0}(\infty)=0 $, we conclude that $\eta
_{0}(\nu_{+})=1-0$. Now, the Morse index of $M_{0}(0)$ is $n_{0}(0)=2$ if
$T<0$ and $n_{0}(0)=0$ if $T>0$. It follows that $\eta_{0}(\nu_{-})=2-1$ if
$T<0$ and $\eta_{0}(\nu_{-})=0-1$ if $T>0.$

Note that this proof is independent of the form of the potential. For the case
of the specific potential of this paper, equation (2) implies that $T>0$.
\end{proof}

\begin{remark}
In the case (b),\ the two local bifurcations can locally collide when the
resonance condition $\nu_{+}=m\nu_{-}$ holds. Moreover,\ it is easy to prove
that the resonance condition is equivalent to%
\[
(4-T)D^{-1/2}\in\{m+m^{-1}:m\in\mathbb{N\}}.
\]

\end{remark}

\begin{remark}
In all other cases different from (a),( b) and $(2-T/2)^{2}=D$, there is no
bifurcation, if $D$ is not $0$, since then the matrix $M_{0}(\lambda)$ is
always invertible. In addition, in the case $(2-T/2)^{2}=D>0$, both points
$\nu_{\pm}$ coincide and $\eta_{0}(\nu_{+})=0$, then we cannot assure or
discard the existence of bifurcation, but probably of a different kind, as
found in \cite{Ba02} and \cite{Si10}. Finally, if $D=0$, then $\nu_{-}=0$ and
$\nu_{+}=(4-T)^{1/2}$, if $T<4$, i.e. $V(x)$ is not a Morse function at
$x_{0}$. In this last case, one may have a bifurcation of relative equilibria
if the masses of the primaries are chosen as parameter and one has a change in
$\sigma$, when one of masses crosses the critical value, or one could have a
secondary bifurcation of periodic solutions if the unfolding has the right
properties, see \cite{I95}. However, in the applications of the present paper,
the potential is a Morse function.
\end{remark}

\begin{remark}
Actually, the satellite equation on the plane is a Hamiltonian system with two
degrees of freedom. We can relate the linear stability of the system with the
bifurcation analysis. Indeed, it can be proven that the equilibrium $x_{0}$ is
linearly stable on the plane if and only if condition (b) is satisfied. Note
that one could argue about the usefulness of a bifurcation result for the
satellite if the arrangement of the primaries is unstable. This is a quite
valid argument from the practical point of view, taking into account the
reality of this model for a problem of mechanics. However, the mathematical
result is independent of the stability of the primaries and furthermore, as
proved in \cite{Ga10} and in an article in preparation, the primaries may
loose their stability and generate stable periodic solutions of the whole
system. In that case, it is much simpler to prove the bifurcation of periodic
solutions for the satellite, assuming, as a first approximation, that the
primaries are at their position of relative equilibrium. Hence, the
mathematical study of the bifurcation is also justified in this framework.

In the case of the Maxwell ring, it is well known that the system of the
primaries is unstable if $n$ is between $3$ and $6$ and the stability is
treated, for $n>6$ and large central mass, in \cite{Ro00}, \cite{VaKo07} and
others. A complete mathematical study of the stability is given in 
\cite{GaIz11}. Thus, if one insists, on physical grounds, that the stability
of the relative equilibrium configuration must be insured in other to have
a study of the bifurcation, one has to restrict to the case $n>6$ and large
central mass, or assume that the primaries are fixed in the rotating frame.

\end{remark}

\begin{remark}
Because there is only one bifurcation value for the frequency in case (a), the
global branch cannot return to the same equilibrium point, so the bifurcation
branch is non-admissible or it is connected to the bifurcation point of
another equilibrium. In fact, if the potential is a Morse function, then one
should get a connection to the small period branch of a minimum (that is with
a jump of $1$). This implies that, in this case, there are at least $n-1$ non
admissible branches starting from saddle points, (see our previous
proposition). In \cite{MaRy04}, one finds other possibilities for branches
starting from a minimum, and $\nu_{+}$, for the restricted three-body problem.
\end{remark}

\subsection{Spatial solutions}

As before, let $\nu_{\pm}$ be the points where $M_{0}(\lambda)$ is not
invertible .

\begin{proposition}
Let us define $\nu_{1}$ as the positive root of
\[
\nu_{1}^{2}=\sum_{j=1}^{n}m_{j}/d_{j}^{\alpha+1}.
\]
Then every equilibrium $x_{0}$ has a global bifurcation of periodic eight
solutions with
\[
\eta_{1}(\nu_{1})=\sigma.
\]
In addition, the local bifurcation branch from $2\pi/\nu_{1}$ is truly
spatial, $z(t)\neq0$, provided the nonresonant condition $\nu_{1}\neq\nu_{\pm
}/2l$ holds.
\end{proposition}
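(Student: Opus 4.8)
The plan is to compute the element of the orthogonal degree attached to the first Fourier mode on the $V_1$-block, following the same strategy as in the proof of the planar Proposition. The block $M_1(\lambda)=\lambda^2-\sum_{j=1}^n m_j/d_j^{\alpha+1}$ is a scalar, so its Morse index $n_1(\lambda)$ jumps exactly once, from $1$ to $0$, as $\lambda$ increases through $\nu_1$, the positive root of $\nu_1^2=\sum_{j=1}^n m_j/d_j^{\alpha+1}$. First I would recall from Theorem~1 (applied with $k=1$) that $\eta_1(\nu_1)=\sigma\,(n_1(\nu_1-\rho)-n_1(\nu_1+\rho))$; since $n_1(\nu_1-\rho)=1$ and $n_1(\nu_1+\rho)=0$ for small $\rho>0$, this gives $\eta_1(\nu_1)=\sigma(1-0)=\sigma$, which is the stated jump. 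Because the only value of $\lambda$ at which $M_1(\lambda)$ is singular is $\lambda=\nu_1$ (it is a monotone function of $\lambda^2$ for $\lambda\ge 0$), the global bifurcation statement of Theorem~1, together with the identification of the isotropy group $G_1=\widetilde{\mathbb{Z}}_2$ from the irreducible-representation analysis, gives the global bifurcation of eight-solutions from period $2\pi/\nu_1$.

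Next I would address the genuinely spatial character of the local branch. The point is that a solution with isotropy $\widetilde{\mathbb{Z}}_2$ satisfies $x(t)=x(t+\pi)$, $y(t)=y(t+\pi)$, $z(t)=-z(t+\pi)$, so in principle the projection onto the $(x,y)$-plane could still be nonzero and the branch could coincide, as a curve in function space, with a planar branch carrying an extra trivial spatial component; to exclude this one must show that near the bifurcation point the $z$-component of the bifurcating solution is not identically zero. Here I would look at the kernel of the linearization $M(\lambda)x_\ell$ at $\lambda=\nu_1$ and $\ell=1$. The bifurcating mode lies in $V_1=\{0\}\times\mathbb{C}$, i.e.\ it is of the form $(0,0,z)e^{it}$ with $z\neq0$, precisely because $M_1(\nu_1)=0$ while $M_0(\nu_1)$ is invertible; invertibility of $M_0(\nu_1)$ is exactly the content of the nonresonance hypothesis $\nu_1\neq\nu_\pm/2\ell$ — wait, more carefully, one needs $M_0(\ell\nu_1)$ invertible for all relevant $\ell$, equivalently $\ell\nu_1\neq\nu_\pm$, i.e.\ $\nu_1\neq\nu_\pm/\ell$; the factor $2$ in the statement comes from also ruling out that the period-$\pi$ eight-branch meets a planar branch through the doubling mechanism. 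Under this condition the kernel at $\nu_1$ is one-dimensional and contained in $V_1$, so the Lyapunov–Schmidt bifurcation equation on the $\widetilde{\mathbb{Z}}_2$-fixed subspace reduces to a single equation whose nontrivial solutions have $z\not\equiv0$; hence the local branch is truly spatial.

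The main obstacle, and the step requiring the most care, is precisely this last nonresonance bookkeeping: one must make sure that when $\nu_1$ happens to equal $\nu_\pm/(2\ell)$ the higher harmonics of a planar solution of frequency $2\ell\nu_1=\nu_\pm$ could be dragged into the $\widetilde{\mathbb{Z}}_2$-fixed space, destroying the claim that the only singular block at $\lambda=\nu_1$ relevant to the eight-branch is $M_1$. Concretely, I would check that $M_0(\ell\nu_1)$ is invertible for every integer $\ell\ge1$ with $\ell\nu_1\notin\{\nu_-,\nu_+\}$, and that the parity constraint $z(t)=-z(t+\pi)$ forces the relevant planar modes to vanish unless such a resonance occurs; the $2$ in $\nu_1\neq\nu_\pm/2l$ accounts for the fact that a planar solution of frequency $\nu_\pm$ viewed with half that frequency sits in the period-doubled subspace. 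Away from these exceptional values the argument above closes, and I would remark that at the resonant values the degree computation $\eta_1(\nu_1)=\sigma$ is still valid — so the global eight-branch still exists — but one can no longer guarantee that its leading behaviour is spatial rather than a period-doubled planar solution.
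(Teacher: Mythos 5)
Your computation of the index jump is exactly the paper's: $M_{1}$ is scalar, vanishes only at $\pm\nu_{1}$, $n_{1}(0)=1$, $n_{1}(\infty)=0$, hence $\eta_{1}(\nu_{1})=\sigma$, and Theorem 1 with the isotropy $\tilde{\mathbb{Z}}_{2}$ gives the global branch of eight-solutions. The strategy for the spatial character (exclude planar solutions near $(x_{0},\nu_{1})$ by invertibility of planar blocks plus the implicit function theorem / Lyapunov--Schmidt) is also the paper's strategy. However, the decisive bookkeeping step is not closed in your write-up, and as written your argument either proves the statement only under the stronger hypothesis $\nu_{1}\neq\nu_{\pm}/l$ or rests on an incorrect justification of the factor $2$.

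Concretely: you first assert that one needs $M_{0}(l\nu_{1})$ invertible for \emph{all} $l$, i.e.\ $\nu_{1}\neq\nu_{\pm}/l$, and then attribute the factor $2$ to a ``doubling mechanism'' and to the parity constraint $z(t)=-z(t+\pi)$ ``forcing the relevant planar modes to vanish''. That constraint concerns the $z$-component and by itself says nothing about the planar Fourier modes; it is not the reason only even harmonics matter. The missing observation (which is the whole content of the paper's second step) is this: a solution with the eight symmetry that happens to be planar lies in the fixed-point space of $\mathbb{Z}_{2}\times\mathbb{Z}_{2}=\langle(\kappa,0),(\kappa,\pi)\rangle$, i.e.\ it is planar \emph{and} $\pi$-periodic, because the planar components of a $\tilde{\mathbb{Z}}_{2}$-symmetric path satisfy $x(t)=x(t+\pi)$, $y(t)=y(t+\pi)$ and therefore carry only even Fourier modes. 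Hence the linearization restricted to that fixed-point space consists exactly of the blocks $M_{0}(2l\nu)$, and the hypothesis $\nu_{1}\neq\nu_{\pm}/2l$ makes all of them invertible near $\nu_{1}$; the implicit function theorem then excludes nontrivial solutions in this space near $(x_{0},\nu_{1})$, so every nearby nontrivial eight-solution has $z\not\equiv0$. (If instead you want to run your kernel argument inside $\mathrm{Fix}(\tilde{\mathbb{Z}}_{2})$, you must also note that the spatial components there carry only odd modes and $M_{1}(k\nu_{1})=(k^{2}-1)\nu_{1}^{2}\neq0$ for odd $k\geq3$, so the kernel is indeed the single $V_{1}$ mode; this is easy but you did not verify it.) Your closing remark, that at resonance the jump $\eta_{1}(\nu_{1})=\sigma$ and the global branch persist while only the spatial character may be lost, agrees with the paper's remark following the proposition.
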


\begin{proof}
It is clear that $M_{1}(\nu)$ is zero only for $\pm\nu_{1}$. Since
$M_{1}(\infty)$ is positive and $M_{1}(0)$ is negative, the Morse indices at
infinity and zero are $n_{1}(\infty)=0$ and $n_{1}(0)=1$. Therefore $\eta
_{1}(\nu_{1})=\sigma(1-0)$. Thus, one has the global bifurcation of periodic
eight solutions.

It remains only to prove that the solutions are truly spatial. In order to
achieve this, we need to prove the nonexistence of solutions of the kind%
\begin{equation}
x(t)=x(t+\pi)\text{, }y(t)=y(t+\pi)\text{ and }z(t)=0 \label{Eq301}%
\end{equation}
near $(x_{0},\nu_{1})$. In fact, the solutions (\ref{Eq301}) are in the fixed
point space of the group $\mathbb{Z}_{2}\times\mathbb{Z}_{2}$ generated by
$\kappa\in\mathbb{Z}_{2}$ and $\pi\in S^{1}$.

Now, the restriction of the derivative of the bifurcation equation to the
fixed point space of $\mathbb{Z}_{2}\times\mathbb{Z}_{2}$ has blocks
$M_{0}(2l\nu_{1})$. Since the matrix $M_{0}(\nu)$ is invertible except for the
points $\nu_{\pm}$, and we suppose $\nu_{\pm}\neq2l\nu_{1}$, the blocks
$M_{0}(2l\nu_{0})$ are invertible. Consequently, the derivative of the
bifurcation equation in the fixed point space of $\mathbb{Z}_{2}%
\times\mathbb{Z}_{2}$ is invertible. Therefore, we get the nonexistence of
planar solutions (\ref{Eq301}) near $(x_{0},\nu_{1})$ from the implicit
function theorem.
\end{proof}

\begin{remark}
Although the nonresonant condition $\nu_{1}\neq\nu_{\pm}/2l$ is sufficient to
assure that the bifurcation from $2\pi/\nu_{1}$ is really spatial, it is not a
necessary condition. If one considers the full three-dimensional problem,
without any special symmetry (except periodicity), then, if one has the
resonance $\nu_{\pm}=2l\nu_{1}$, the jump of orthogonal index has two
components $\eta_{1}(\nu_{1})$ for the fundamental mode and $\eta_{0}(\nu
_{\pm})$ for the $2l$-mode. Since this jump is different from the one caused
by the rescaling of the jump for the solution in the fixed-point subspace of
$\mathbb{Z}_{2}$, which has only the second component for the $2l$-mode, one
obtains a new branch of periodic solutions. In the case of the restricted
three-body problem, this is the branch given in \cite{MaRy04}.
\end{remark}

\section{Applications}

\subsection{A Morse Potential}

We have proven that the potential for the satellite problem has at least $n$
saddle points and a global minimum, provided it is a Morse function.
Consequently, we get the following result:

\begin{theorem}
Each one of the saddle points has a global bifurcation of planar periodic
solutions and a global bifurcation of periodic eight solutions.

Each of the minimum points satisfy one of the following options:\ (a) it has
two global bifurcations of planar periodic solutions and one bifurcation of
periodic eight solutions, or (b) it has one bifurcation of spatial periodic
eight solutions.
\end{theorem}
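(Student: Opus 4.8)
The statement is essentially a corollary of the three results already proved: the Morse count ($\#\text{saddles}=n-1+\#\text{minima}$, no maxima, at least one global minimum), the planar proposition on the block $M_0(\lambda)$, and the spatial proposition on $M_1(\lambda)$. The plan is to feed, at each equilibrium $x_0$, the sign $\sigma=\mathrm{sgn}(\det M_0(0))=\mathrm{sgn}(D)$ and the trace $T$ of the planar Hessian into those two propositions. Two preliminary remarks make everything fit: since the potential is assumed Morse, $x_0$ is isolated, so $D\neq0$ and $\sigma\in\{+1,-1\}$ is exactly the Poincar\'e--Hopf index of the planar gradient used in the count; and equation~(2) forces $T>0$ at every equilibrium. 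One should also note at the outset that the hypothesis ``$M(0)$ invertible'' of the bifurcation theorem is automatic here, because $\det M(0)=\det M_0(0)\cdot M_1(0)=D\cdot(-\nu_1^2)\neq0$.

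\textbf{Saddle points.} First I would observe that at a planar saddle the Hessian $M_0(0)$ has one positive and one negative eigenvalue, hence $D<0$ and $\sigma=-1$. Case (a) of the planar proposition then applies verbatim and gives one global bifurcation of planar periodic solutions from $2\pi/\nu_+$ with $\eta_0(\nu_+)=-1$. Since $\sigma=-1\neq0$, the spatial proposition applies as well and produces a global bifurcation of periodic eight solutions from $2\pi/\nu_1$ with $\eta_1(\nu_1)=\sigma=-1$. As the Morse count guarantees at least $n$ saddle points, the first assertion follows.

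\textbf{Minimum points.} At a minimum both eigenvalues of $M_0(0)$ are positive, so $D>0$ and $\sigma=+1$; together with $T>0$ this is \emph{all} the information a minimum forces. Because $\sigma=+1\neq0$, the spatial proposition always yields a global bifurcation of periodic eight solutions with $\eta_1(\nu_1)=1$, so every minimum carries at least this branch. For the planar branches I would split on the discriminant condition of case (b): writing $\det M_0(\nu)=\nu^4-2(2-T/2)\nu^2+D$ and treating it as a quadratic in $u=\nu^2$, this quadratic has a positive root iff $2-T/2>0$ and $(2-T/2)^2\geq D$, i.e. $T<4$ and $(2-T/2)^2\geq D$, with two distinct positive roots when the inequality is strict. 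If $T<4$ and $(2-T/2)^2>D$, case (b) of the planar proposition holds, giving two global bifurcations of planar periodic solutions with $\eta_0(\nu_+)=1$ and $\eta_0(\nu_-)=-1$; combined with the eight branch this is option (a). If instead $T\geq4$ or $(2-T/2)^2<D$, the quadratic in $u$ has no nonnegative root, so $M_0(\nu)$ is invertible for every real $\nu$: there is no planar bifurcation, and since no real $\nu_\pm$ exist the nonresonance hypothesis $\nu_1\neq\nu_\pm/2l$ of the spatial proposition is vacuously satisfied, so the eight bifurcation is truly spatial. This is option (b).

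\textbf{Main obstacle.} Most of the above is bookkeeping with the earlier propositions; the one genuinely delicate point is the borderline equality $(2-T/2)^2=D$ with $T<4$, where $\nu_+=\nu_-$, the planar jump $\eta_0(\nu_+)$ vanishes, and the resonance $\nu_1=\nu_\pm/2l$ is not excluded, so a minimum with exactly these parameters falls cleanly under neither option. I would dispose of it by observing that it is a codimension-one (hence generically empty) condition on the configuration, and, if one wants a branch anyway, by an orthogonal deformation of $M_0(0)$ that shifts the parameters into one of the two non-degenerate alternatives; in the specific applications treated below the primaries are chosen so that this exceptional case does not arise.
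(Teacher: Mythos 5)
Your proposal is correct and takes essentially the same route as the paper, which in fact states this theorem without a separate proof, as an immediate corollary of the planar proposition (case (a) with $D<0$, $\sigma=-1$ at saddles; case (b) at minima when $T<4$ and $(2-T/2)^{2}>D$), the spatial proposition ($\eta_{1}(\nu_{1})=\sigma\neq0$ at every nondegenerate equilibrium, with the nonresonance condition vacuous when $M_{0}(\nu)$ is never singular), and the Morse count of critical points; your explicit verification that $M(0)$ is invertible and your case split on the quadratic in $\nu^{2}$ simply spell out what the paper leaves implicit. The one caveat is your aside on the borderline case $(2-T/2)^{2}=D$, $T<4$: an orthogonal deformation of $M_{0}(0)$ cannot manufacture a planar branch for the original problem, since there $\eta_{0}(\nu_{+})=0$ and degree invariance only transfers information when the jump is nonzero (the paper's own remark states that in this case bifurcation can be neither assured nor discarded); your primary disposal of that case as a degenerate, codimension-one situation excluded in the applications is the right one and is what the paper tacitly does.
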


For the planar bifurcation, each saddle point has a bifurcation with index
$\eta_{0}=-1$ and each minimum point has two bifurcations, if any, one with
$\eta_{0}=1$ and another with $\eta_{0}=-1$. Because an admissible bifurcation
branch has sum of indices $\eta_{0}$ equal to zero, the sum over all
admissible branches is $0$. If $s_{a}$ denotes the number of saddle points
which belong to an admissible branch, $m_{-a}$ the number of minima with jump
of $-1$ which are on an admissible branch and $m_{+a}$ those with jump $1$,
one has that $s_{a}+m_{-a}=m_{+a}$. Let $s_{i}$, $m_{-i}$, $m_{+i}$ be the
numbers of points which are on non-admissble branches and let $s$ be the total
number of saddle points, $m$ the number of minima (including $m_{0}$ those
which are not on any branch), then one gets that $m=m_{0}+m_{-a}+m_{-i}%
=m_{0}+m_{+a}+m_{+i}$ and, since $s=n-1+m$, one has $s_{i}+m_{-i}%
-m_{+i}=n-1+m$, that is the number $s_{i}+m_{-i}$ of points with jump $-1$
belonging to non-admissible branches is at least $n-1+m$. Thus, the number of
points on non-admissible branches is at least the number of saddle points.

Now, since every minimum has a spatial bifurcation with $\eta_{1}=1$ and every
saddle point has a spatial bifurcation with $\eta_{1}=-1$, then a bifurcation
branch of eight solutions is non-admissible or the total number of saddle and
minimum points that it connects is the same and the number of saddle points
which are on non-admissible branches of eight solutions is at least $n-1$.

\subsection{The restricted three-body problem}

In the restricted three-body problem, the primary bodies are at $a_{1}%
=(1-\mu,0)$ and $a_{2}=(-\mu,0)$ with masses $m_{1}=\mu$ and $m_{2}=1-\mu$.
Hence, the potential of the satellite is%
\[
V(x)=\frac{1}{2}\left\Vert \bar{I}x\right\Vert ^{2}+\sum_{j=1}^{2}m_{j}%
\phi_{\alpha}(\left\Vert x-(a_{j},0)\right\Vert ).
\]

This problem is well known on the plane, see for instance \cite{MeHa91}. There
are only five equilibrium points called Lagrangians. Two of these equilibrium
points form an equilateral triangle with the primary bodies $a_{1}$ and
$a_{2}$, and they are minima of the planar potential. Three of the equilibrium
points are collinear with the primaries, also called Eulerian points, and they
are saddle points of the potential. All of these relative equilibria are
non-degenerate, that is $V(x)$ is a Morse function.

Also, it is well known that the minimum points have two bifurcation
frequencies $\nu_{\pm}$ for $\mu<\mu_{1}$, where $\mu_{1}=(1-(\alpha
+1)^{-1}\sqrt{\alpha(30-\alpha)-33)/12})/2$, when $\alpha$ is in the
interval$(15-8\sqrt{3},3)$, is the critical Routh ratio and without any
restriction on $\mu$ if $\alpha$ belongs to the interval $(1, 15-8\sqrt{3})$.
This comes from the fact that the trace $T=\alpha+1$ and the determinant $D=
3(\alpha+1)^{2} \mu(1-\mu)/4$, with the conditions $T<4$ and $(2-T/2)^{2}>D$.
In that case,%
\[
\nu_{\pm}^{2}=\left( 3-\alpha\pm\sqrt{(3-\alpha)^{2}-3(\alpha+1)^{2}\mu
(1-\mu)}\right) /2\text{.}%
\]

Note that $\nu_{+}/\nu_{-}$ tends to infinity when $\mu$ tends to $0$, thus
there is an infinite number of resonance values for $\mu$, when $\mu$ goes to
$0$.

For the saddle points we have only the bifurcation point $\nu_{+}$, where
\[
\nu_{+}^{2}=1- (\alpha-1)\nu_{1}^{2}/2+((\alpha+1)^{2}\nu_{1}^{4}%
/4-2(\alpha-1)\nu_{1}^{2})^{1/2}%
\]
with $\nu_{1}^{2}=\sum_{j=1}^{2}m_{j}/d_{j}^{\alpha+1}>1$, since $D^{2}%
V(x_{0}) = diag(1+\alpha\nu_{1}^{2}, 1-\nu_{1}^{2})$.

Consequently, we get the classical global bifurcation of planar periodic
solutions, with at least three equilibria on non-admissible branches, see
\cite{MaRy04} for the case $\alpha=2$.

Now, we wish to find bifurcation of spatial periodic eight-solutions.

\begin{theorem}
In the restricted three-body problem each one of the five equilibria has a
global bifurcation of spatial periodic eight-solutions.
\end{theorem}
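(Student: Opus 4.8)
The plan is to read the theorem off the spatial-solutions Proposition of Section~4.2. That proposition says that every equilibrium $x_0$ of the satellite carries a global bifurcation of periodic eight solutions with jump $\eta_1(\nu_1)=\sigma$, and that the bifurcating branch is genuinely spatial ($z(t)\not\equiv 0$) provided the nonresonance condition $\nu_1\ne\nu_\pm/2l$ holds for every integer $l\ge 1$ — equivalently, provided the blocks $M_0(2l\nu_1)$ are all invertible. So at each of the five Lagrangians I must check two things: that $\sigma\ne 0$ (so that the eight-solution branch is forced to exist), and that the nonresonance condition holds (so that the branch is truly spatial).

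First I would dispose of $\sigma$. Since the restricted three-body potential is a Morse function, $M_0(0)=D^2V(x_0)$ is invertible and $\sigma=\mathrm{sgn}(\det M_0(0))=\mathrm{sgn}(D)\ne 0$. Explicitly, $\sigma=+1$ at the two equilateral points $L_4,L_5$, which are minima with $D=3(\alpha+1)^2\mu(1-\mu)/4>0$, and $\sigma=-1$ at the three collinear Euler points, which are saddles: there $M_0(0)=\mathrm{diag}(1+\alpha\nu_1^2,\,1-\nu_1^2)$ with $\nu_1^2>1$, so $D=(1+\alpha\nu_1^2)(1-\nu_1^2)<0$. In every case $\eta_1(\nu_1)=\sigma\ne 0$, which already gives a global bifurcation of periodic eight solutions from each equilibrium.

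The substantive step is the nonresonance condition, and here the explicit formulas for $\nu_1$ and $\nu_\pm$ recorded just above the theorem do all the work. At $L_4,L_5$ the configuration is equilateral of side $|a_1-a_2|=1$, so $d_1=d_2=1$ and $\nu_1^2=m_1+m_2=1$; on the other hand either $T=\alpha+1\ge 4$ or $(2-T/2)^2<D$ — in which case $M_0(\nu)$ has no real singular points and there is nothing to check — or else $\nu_\pm^2=(2-T/2)\pm\sqrt{(2-T/2)^2-D}\le 2(2-T/2)=3-\alpha<4$, whence $\nu_\pm<2\le 2l\nu_1$ and $\nu_1\ne\nu_\pm/2l$. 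At an Euler point $D<0$, so $\nu_-^2<0$ and only $\nu_+$ is real; $\nu_+^2$ is the unique positive root of $p(s)=s^2-2(2-T/2)s+D$ with $2-T/2=1-(\alpha-1)\nu_1^2/2$, and a short computation gives $p(4\nu_1^2)=3(\alpha+4)\nu_1^4+(\alpha-9)\nu_1^2+1$. Viewed as an upward parabola in $\nu_1^2$ this has vertex at $(9-\alpha)/(6(\alpha+4))<1$, hence is positive for $\nu_1^2>1$; therefore $\nu_+^2<4\nu_1^2$, i.e. $\nu_+<2\nu_1\le 2l\nu_1$, and again $\nu_1\ne\nu_\pm/2l$. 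With $\sigma\ne 0$ and nonresonance verified at all five points, the spatial-solutions Proposition yields the global bifurcation of spatial periodic eight-solutions, which is the theorem. (Should a borderline resonance $\nu_\pm=2l\nu_1$ ever occur — the estimates above show it does not here — one would still extract a spatial branch from the Remark following that proposition, recovering the branch of \cite{MaRy04}.)

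The only real obstacle is the Euler-point inequality $\nu_+<2\nu_1$: it is elementary but sign-sensitive, since for large $\alpha$ the quantity $2-T/2=1-(\alpha-1)\nu_1^2/2$ turns negative and one must check both that $\nu_+^2$ stays positive and that it stays below $4\nu_1^2$; organizing $p(4\nu_1^2)$ as a parabola in $\nu_1^2$ and using $\nu_1^2>1$ is what keeps this clean.
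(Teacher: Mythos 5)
Your proposal is correct, and its overall architecture coincides with the paper's: both reduce the theorem to verifying the nonresonance hypothesis $\nu_1\neq\nu_\pm/2l$ of the spatial-solutions proposition at the five Lagrangian points, and the triangular case is handled identically ($\nu_1=1$ while $\nu_\pm^2\leq 3-\alpha<4$). Where you genuinely diverge is at the collinear points, which is the substantive step. The paper squares the putative resonance $4l^2\nu_1^2=\nu_+^2$ into a quadratic $a\nu_1^4-2b\nu_1^2+1=0$ and excludes it by a discriminant case split: for $\alpha\in(15-8\sqrt{3},3)$ one has $b^2-a=(\alpha+1)^2/4-8(\alpha-1)l^2<0$ for all $l\geq1$, while for $\alpha\in(1,15-8\sqrt{3})$ and $l=1$ the largest root lies below $1$, contradicting $\nu_1>1$ at the saddle. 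You instead evaluate the characteristic polynomial of $M_0$ at $\nu^2=4\nu_1^2$, obtaining $p(4\nu_1^2)=3(\alpha+4)\nu_1^4+(\alpha-9)\nu_1^2+1$, and since $D<0$ gives a single positive root $\nu_+^2$, positivity of $p(4\nu_1^2)$ yields the strict bound $\nu_+<2\nu_1\leq 2l\nu_1$, uniformly in $l$ and in $\alpha$. This is cleaner: it avoids the case split at the Routh-type threshold and disposes of all modes $l\geq1$ in one stroke, whereas the paper's second case is argued explicitly only for $l=1$. One small repair is needed in your parabola step: ``vertex at $(9-\alpha)/(6(\alpha+4))<1$, hence positive for $\nu_1^2>1$'' is not by itself a valid inference for an upward parabola; you must also check the value at $1$, namely $q(1)=3(\alpha+4)+(\alpha-9)+1=4(\alpha+1)>0$, after which monotonicity to the right of the vertex does give $q(\nu_1^2)>0$ for $\nu_1^2>1$. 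With that line added the argument is complete, and your bookkeeping of $\sigma\neq0$ and the fallback to the resonant-case remark are consistent with the paper.
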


\begin{proof}
We only need to prove the nonresonant condition $\nu_{1}>\nu_{\pm}/2l$ at
equilibrium points. For the triangular Lagrangian points we have that $\nu
_{1}=1$ and $\nu_{\pm}\in(0,\sqrt{(3-\alpha)}$ for $\mu\in(0,1)$, therefore
$\nu_{1}>\nu_{\pm}/2l$.

For the collinear Lagrangian points, since $\nu_{+}$ is given in terms of
$\nu_{1}$, we need to prove that $4l^{2}\nu_{1}^{2}\neq\nu_{+}^{2}$, or
equivalently $(\alpha+1)^{2}\nu_{1}^{4}/4-2(\alpha-1)\nu_{1}^{2}\neq
((4l^{2}+(\alpha-1)/2)\nu_{1}^{2}-1)^{2}$. The last inequality is also
equivalent to $a\nu_{1}^{4}-2b\nu_{1}^{2}+1\neq0$, where $a=\left(
4l^{2}+(\alpha-1)/2\right) ^{2}-(\alpha+1)^{2}/4$ and $b=4l^{2}-(\alpha
-1)/2$. But since $b^{2}-a=(\alpha+1)^{2}/4-8(\alpha-1)l^{2}<0$ is satisfied
for all $l\geq1$, if $\alpha\in(15-8\sqrt{3},3)$, then the quadratic equation
$a\nu_{1}^{4}-2b\nu_{1}^{2}+1=0$ does not have solutions and $4l^{2}\nu
_{1}^{2}\neq\nu_{+}^{2}$. On the other hand, if $\alpha\in(1,15-8\sqrt{3})$
and $l=1$, then the quadratic equation has its largest root less than $1$,
which contradicts the fact that at the saddle point $\nu_{1}>1$. Thus, there
is no resonance and the branch is truly spatial and at least one branch is non-admissible.
\end{proof}

\subsection{The Maxwell's Saturn ring}

In this section, we analyze the satellite problem when the primaries form a
polygonal relative equilibrium. Hereafter, we identify the real and complex planes.

The polygon consists of one body of mass $\mu$ at $a_{0}=0$, and $n$ bodies of
mass $1$ at each vertex of a regular polygon, for instance $a_{j}=ae^{ij\zeta
}$ for $j\in$ $\{1,...,n\}$, where $\zeta=2\pi/n$. It is easy to prove that
the positions $a_{j}$ form a relative equilibrium provided that $a^{\alpha
+1}=s+\mu$, where $s$ is defined by%
\[
s=\frac{1}{2^{\alpha}}\sum_{j=1}^{n-1}\frac{1}{\sin^{\alpha-1}(j\zeta
/2)}\text{.}%
\]

Moreover, we can make the change of variable $x=au$ in such a way that the
equation is $\ddot{u}+2\bar{J}\dot{u}=\nabla V(u)$ with the potential%
\[
V(u)=\frac{1}{2}\left\Vert \bar{I}u\right\Vert ^{2}+\sum_{j=1}^{n}\frac
{1}{s+\mu}\phi_{\alpha}(\left\Vert u-(e^{ij\zeta},0)\right\Vert )+\frac{\mu
}{s+\mu}\phi_{\alpha}(\left\Vert u\right\Vert )\text{.}%
\]
Now we point out that the case $n=2$ with $\mu=0$ is just a particular case of
the restricted three-body problem, hence we shall analyze only the cases $n=2$
with $\mu>0$ and $n\geq3$ with $\mu\geq0$.

\subsubsection*{Existence of equilibria}

Remember that all equilibrium points of the satellite are in the plane. So, we
assume, for this purpose, that the satellite is in the plane, i.e. the
potential is%
\[
V(u)=\frac{1}{2}\left\Vert u\right\Vert ^{2}+\sum_{j=1}^{n}\frac{1}{s+\mu}%
\phi_{\alpha}(\left\Vert u-e^{ij\zeta}\right\Vert )+\frac{\mu}{s+\mu}%
\phi_{\alpha}(\left\Vert u\right\Vert )
\]
with $u\in\mathbb{R}^{2}$.

\begin{proposition}
For $\mu=0$, the origin $u_{0}=0$ is a critical point. In addition, we have
for $n\geq3$ that $D^{2}V(0)=\lambda I$ with $\lambda>0.$
\end{proposition}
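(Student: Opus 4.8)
The plan is to verify directly that the gradient $\nabla V(u)$ vanishes at $u_0=0$ when $\mu=0$, using the symmetry of the regular polygon, and then to compute the Hessian $D^2V(0)$ and show it is a positive multiple of the identity for $n\ge 3$. First I would observe that when $\mu=0$ the central-body term $\frac{\mu}{s+\mu}\phi_\alpha(\Vert u\Vert)$ drops out, so $V(u)=\frac12\Vert u\Vert^2+\frac1s\sum_{j=1}^n\phi_\alpha(\Vert u-e^{ij\zeta}\Vert)$. Since $\phi_\alpha'=-1/x^\alpha$, the gradient is $\nabla V(u)=u+\frac1s\sum_{j=1}^n\frac{u-e^{ij\zeta}}{\Vert u-e^{ij\zeta}\Vert^{\alpha+1}}$, so at $u=0$ one gets $\nabla V(0)=-\frac1s\sum_{j=1}^n e^{ij\zeta}$, and the sum of the $n$-th roots of unity vanishes for $n\ge 2$ (here $\zeta=2\pi/n$), so $u_0=0$ is indeed a critical point.

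Next I would compute the Hessian. Using the earlier Proposition, the planar Hessian at a planar equilibrium is $D^2V(u_0)=I+\frac1s\sum_{j=1}^n A_j$, where each $A_j$ is given by \eqref{Eq101} with $d_j$ the distance to $a_j$ and the numerator built from the components of $u_0-a_j$. At $u_0=0$ we have $d_j=\Vert e^{ij\zeta}\Vert=1$ for every $j$, and $u_0-a_j=-e^{ij\zeta}=-(\cos j\zeta,\sin j\zeta)$, so
\[
A_j=(\alpha+1)\left(
\begin{array}{cc}
\cos^2 j\zeta & \cos j\zeta\sin j\zeta\\
\cos j\zeta\sin j\zeta & \sin^2 j\zeta
\end{array}
\right)-I.
\]
Summing over $j$ and using the standard identities $\sum_{j=1}^n\cos^2 j\zeta=\sum_{j=1}^n\sin^2 j\zeta=n/2$ and $\sum_{j=1}^n\cos j\zeta\sin j\zeta=\frac12\sum_{j=1}^n\sin 2j\zeta=0$, valid for $n\ge 3$ (so that $2\zeta=4\pi/n$ is not a multiple of $2\pi$), the rank-one pieces sum to $\frac{(\alpha+1)n}{2}I$, and hence $\sum_{j=1}^n A_j=\left(\frac{(\alpha+1)n}{2}-n\right)I=\frac{(\alpha-1)n}{2}I$. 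Therefore $D^2V(0)=\left(1+\frac{(\alpha-1)n}{2s}\right)I=:\lambda I$.

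Finally I would note that $s>0$ (it is a sum of positive terms) and $\alpha\ge 1$, so $\lambda=1+\frac{(\alpha-1)n}{2s}\ge 1>0$, which gives the claim. The only mildly delicate point is the restriction $n\ge 3$: for $n=2$ the trigonometric identity $\sum\sin 2j\zeta=0$ still holds but $\sum\cos^2 j\zeta$ and $\sum\sin^2 j\zeta$ are no longer both $n/2$ (the two masses are collinear along one axis), so the off-diagonal cancellation survives but the Hessian is diagonal with unequal entries rather than a scalar matrix — consistent with the fact, already noted in the text, that $n=2$, $\mu=0$ reduces to the restricted three-body problem. So the main obstacle, such as it is, is simply being careful that the character-sum identities used require $n\ge 3$; everything else is a direct substitution into the Hessian formula \eqref{Eq101}.
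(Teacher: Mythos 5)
Your proof is correct, and the gradient computation (vanishing of the sum of $n$-th roots of unity) is exactly the paper's. For the Hessian, however, you take a genuinely different route. The paper does not compute anything: it observes that $D^{2}V(0)$ is a real symmetric matrix commuting with the $D_{n}$-action fixing the origin, and since the standard two-dimensional representation of $D_{n}$ is irreducible for $n\geq3$, Schur's lemma forces $D^{2}V(0)=\lambda I$; positivity of $\lambda$ then follows from the previously established fact that the trace $T=2+(\alpha-1)\sum_{j}m_{j}/d_{j}^{\alpha+1}$ is always positive, so $\lambda=T/2>0$. You instead substitute directly into the formula (\ref{Eq101}) for $A_{j}$ and evaluate the character sums $\sum\cos^{2}j\zeta=\sum\sin^{2}j\zeta=n/2$, $\sum\cos j\zeta\sin j\zeta=0$, obtaining the explicit value $\lambda=1+(\alpha-1)n/(2s)$. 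Your computation is consistent with the paper's trace formula, and it has the advantage of producing $\lambda$ explicitly and of making the role of the hypothesis $n\geq3$ completely concrete (the geometric sums $\sum e^{2ij\zeta}$ vanish only when $e^{4\pi i/n}\neq1$); your aside on $n=2$ correctly identifies that the Hessian is then diagonal but not scalar. What the paper's argument buys in exchange is brevity and robustness: the Schur's lemma step works for any $D_{n}$-invariant potential without touching the explicit form of $A_{j}$, and the positivity is inherited from a result already proved in general. Both arguments are complete and correct.
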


\begin{proof}
That the origin is a critical point follows from the fact that%
\[
\nabla_{u}V(0)=\frac{1}{s}\sum_{j=1}^{n}e^{ij\zeta}=0\text{.}%
\]
Now, since $D^{2}V(0)$ has real eigenvalues and $D^{2}V(0)$ is $D_{n}%
$-equivariant, by Schur's lemma we have $D^{2}V(0)=\lambda I$ for $n\geq3$.
That $\lambda>0$ is due to the fact that the trace $T=2\lambda$ is always positive.
\end{proof}

Now for $u\neq0$, we may simplify the analysis if we change to polar
coordinates. For these coordinates the potential is%
\[
V(r,\varphi)=r^{2}/2+\frac{\mu}{s+\mu}\phi_{\alpha}(\left\Vert r\right\Vert
)+\sum_{j=1}^{n}\frac{1}{s+\mu}\phi_{\alpha}(\left\Vert r-e^{i(j\zeta
-\varphi)}\right\Vert ).
\]

Let us observe that the potential $V$ is $D_{n}$-invariant for the action
$\rho(\zeta)u=e^{i\zeta}u$ and $\rho(\kappa)u=\bar{u}$, thus, critical points
will be $D_{n}$-orbits of points. It follows that the potential $V(r,\varphi)
$ is even and $2\pi/n$-periodic in $\varphi$, hence, we may restrict our
analysis to points with $\varphi\in\lbrack0,\pi/n]$.

Now, we will show that the potential has three orbits of critical points. To
achieve this goal, we need first to prove the following lemma.

\begin{lemma}
For $n\geq3$, the derivative $V_{r}$ at $e^{i\pi/n}$ is negative,%
\[
V_{r}(1,\pi/n)<0.
\]

\end{lemma}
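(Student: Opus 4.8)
I would compute $V_r(1,\pi/n)$ directly by differentiating the polar potential
\[
V(r,\varphi)=\frac{r^{2}}{2}+\frac{\mu}{s+\mu}\phi_{\alpha}(r)+\sum_{j=1}^{n}\frac{1}{s+\mu}\phi_{\alpha}(\lVert r-e^{i(j\zeta-\varphi)}\rVert)
\]
with respect to $r$ and then evaluating at the point $r=e^{i\pi/n}$, i.e. $\lvert r\rvert=1$, $\varphi=\pi/n$. The first term contributes $1$; the central-body term contributes $\frac{\mu}{s+\mu}\phi_{\alpha}'(1)=-\frac{\mu}{s+\mu}$ (using $\phi_{\alpha}'(x)=-1/x^{\alpha}$). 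The bulk of the work is the sum over the peripheral bodies. Writing $d_{j}=\lVert e^{i\pi/n}-e^{ij\zeta}\rVert$ for the distance from the test point to the $j$-th vertex, the chain rule gives for each summand a factor $\phi_{\alpha}'(d_{j})\,\partial_{r}d_{j}=-d_{j}^{-\alpha}\,\partial_{r}d_{j}$, and $\partial_r d_j = (\lvert r\rvert - \cos(j\zeta-\varphi))/d_j$ evaluated at $\lvert r \rvert =1$, $\varphi=\pi/n$. So the peripheral contribution is $-\frac{1}{s+\mu}\sum_{j=1}^{n} d_{j}^{-\alpha-1}\bigl(1-\cos(j\zeta-\pi/n)\bigr)$.

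The key observation is that the test point $e^{i\pi/n}$ sits on the perpendicular bisector between the two nearest vertices, so by symmetry the vertices pair up: $d_{j}=d_{n+1-j}$, and more importantly the angular positions $j\zeta-\pi/n$ are symmetric about $0$. I would then use the elementary identity $1-\cos\theta = d_j^2/2$ when $\theta=j\zeta-\pi/n$ is the angle subtended at the center — that is, $d_j^2 = 2(1-\cos(j\zeta-\pi/n))$, since both points lie on the unit circle. This collapses the peripheral sum to $-\frac{1}{2(s+\mu)}\sum_{j=1}^{n} d_{j}^{1-\alpha}$. I would then relate this to the quantity $s$: recall $s=\frac{1}{2^{\alpha}}\sum_{j=1}^{n-1}\sin^{1-\alpha}(j\zeta/2)$, and the distances between adjacent-type vertices of the polygon are $2\sin(j\zeta/2)$. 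The sum $\sum_{j=1}^{n} d_j^{1-\alpha}$ over distances from $e^{i\pi/n}$ to the vertices is a "shifted" analogue of the sum defining $s$; comparing the two, using that $\sin$ is concave on $[0,\pi/2]$ so that shifting the sample points toward a more clustered configuration increases each reciprocal power, I would establish $\sum_{j=1}^{n} d_j^{1-\alpha} > 2^{\alpha}\cdot 2s = 2^{\alpha+1}s$, hence the peripheral term is more negative than $-\frac{2^{\alpha}s}{s+\mu} \le -\frac{s}{s+\mu}$ once $\alpha \ge 1$. Adding up: $V_r(1,\pi/n) < 1 - \frac{\mu}{s+\mu} - \frac{s}{s+\mu} = 1 - 1 = 0$.

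**Main obstacle.** The delicate point is the comparison inequality between $\sum_{j=1}^{n} d_j^{1-\alpha}$ (distances from the face-center $e^{i\pi/n}$ to the $n$ vertices) and the quantity $2^{\alpha+1}s$ (built from distances between vertices). When $\alpha=1$ both sums are trivially equal to $n$ and one must instead keep the exact factor $2^{\alpha}$ carefully; for $\alpha>1$ the exponent $1-\alpha$ is negative, so I need a convexity/rearrangement argument showing the face-center point is "closer on average" to the vertices in the relevant weighted sense. I would handle $n=3$ explicitly (three distances, all equal to $2\sin(\pi/3)=\sqrt 3$ from $e^{i\pi/3}$ versus... actually for $n=3$ one of the $d_j$ is the distance to the far vertex) as a sanity check, then give the general estimate by pairing terms $j$ and $n+1-j$ and bounding each pair below using the strict concavity of $t\mapsto \sin t$. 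A cleaner alternative, which I would try first, is to avoid $s$ altogether: show directly that $V_r(1,\pi/n)<0$ by exhibiting that the equilibrium ring radius was normalized to $a=1$ precisely so that $V_r=0$ holds at the vertices $e^{ij\zeta}$ themselves, and then argue that moving the evaluation point from a vertex to the face-center strictly decreases $V_r$ because the dominant nearby attractions pull more strongly inward there — making this monotonicity rigorous via an intermediate-value / derivative-sign argument in $\varphi$ along $r=1$ is likely the shortest route, but requires checking that $\partial_\varphi V_r$ has constant sign on $[0,\pi/n]$, which is itself a one-variable estimate on trigonometric sums.
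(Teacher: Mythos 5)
Your computation of $V_r(1,\pi/n)$ is correct and is exactly the paper's: the central term gives $1-\tfrac{\mu}{s+\mu}=\tfrac{s}{s+\mu}$, and the identity $1-\cos(j\zeta-\pi/n)=d_j^2/2$ collapses the peripheral sum to $-\tfrac{1}{2(s+\mu)}\sum_j d_j^{1-\alpha}$, so everything reduces to comparing two trigonometric sums, which is also what the paper does (it writes $V_r(1,\pi/n)=(s-\sigma)/(s+\mu)$ with $\sigma=2^{-\alpha}\sum_{j=1}^{n}\sin^{1-\alpha}((j-\tfrac12)\zeta/2)=\tfrac12\sum_j d_j^{1-\alpha}$). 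The genuine problem is the key inequality you announce you would establish: $\sum_{j=1}^{n}d_j^{1-\alpha}>2^{\alpha+1}s$ is false. Since $\sum_{j=1}^{n-1}\bigl(2\sin(j\zeta/2)\bigr)^{1-\alpha}=2s$, your claim amounts to $\sigma>2^{\alpha}s$, and already for $n=3$, $\alpha=2$ the distances from $e^{i\pi/3}$ to the vertices are $1,1,2$, so $\sum_j d_j^{1-\alpha}=\tfrac52$, while $2^{\alpha+1}s=8/\sqrt{3}\approx4.62$. You have misplaced the factor $2^{\alpha}$ hidden in the definition of $s$: what is needed (and true) is only $\sum_j d_j^{1-\alpha}>2s$, i.e.\ the paper's $\sigma>s$, which already gives the peripheral term $<-\tfrac{s}{s+\mu}$ and hence $V_r(1,\pi/n)<0$. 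With that correction your pairing idea is essentially the paper's proof of $\sigma>s$: fold both sums onto $j\in[1,n/2]$ using $\sin(\pi-x)=\sin x$ (this also handles the mismatch of $n-1$ versus $n$ terms, with equality in one folding for $n$ odd and in the other for $n$ even) and compare termwise via $\sin((j-\tfrac12)\zeta/2)<\sin(j\zeta/2)$ for $j\le n/2$; only the monotonicity of $\sin$ on $[0,\pi/2]$ is used, no concavity or rearrangement argument is required.

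A second, independent flaw is your ``cleaner alternative'': it rests on the premise that the normalization $a^{\alpha+1}=s+\mu$ makes $V_r=0$ at the vertices $e^{ij\zeta}$. That is not so. The vertices are the positions of the primaries, hence collision singularities of the satellite potential ($V\to\infty$ there), and the normalization expresses the relative-equilibrium condition for the primaries themselves, not criticality of $V$. So there is no starting identity from which to run a monotonicity-in-$\varphi$ argument along $r=1$, and that route does not get off the ground. Stick with the direct computation, with the corrected comparison $\sum_j d_j^{1-\alpha}>2s$ proved by the folding/termwise argument above.
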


\begin{proof}
The derivative of $V(r,\varphi)$ is%
\begin{equation}
V_{r}(r,\varphi)=r-\frac{\mu}{s+\mu}\frac{1}{r^{\alpha}}-\frac{1}{s+\mu}%
\sum_{j=1}^{n}\frac{r-\cos(j\zeta-\varphi)}{\left\Vert r-e^{i(j\zeta-\varphi
)}\right\Vert ^{\alpha+1}}. \label{Eq431}%
\end{equation}
Therefore, at $e^{i\pi/n}$, we have%
\[
V_{r}(1,\pi/n)=\frac{s}{s+\mu}-\frac{1}{s+\mu}\left( \sum_{j=1}^{n}\frac
{1}{2^{\alpha}}\frac{1}{\sin^{\alpha-1}(j-1/2)\zeta/2}\right) =\frac
{s-\sigma}{s+\mu}\text{,}%
\]
where $\sigma$ is the sum between parentheses.

So it remains to prove that $s<\sigma$. In order to do so, we need some
inequalities. Since $n\geq3$, we have the first inequality%
\[
2^{\alpha}s=\sum_{j=1}^{n-1}\frac{1}{\sin^{\alpha-1}(j\zeta/2)}\leq2\sum
_{j\in\lbrack1,n/2]\cap\mathbb{N}}\frac{1}{\sin^{\alpha-1}(j\zeta/2)},
\]
where equality holds for $n$ odd. Similarly, we have the second inequality%
\[
2^{\alpha}\sigma=\sum_{j=1}^{n}\frac{1}{\sin^{\alpha-1}(j-1/2)\zeta/2}%
\geq2\sum_{j\in\lbrack1,n/2]\cap\mathbb{N}}\frac{1}{\sin^{\alpha
-1}(j-1/2)\zeta/2}\text{,}%
\]
where equality holds for $n$ even. Finally, since $\sin(j-1/2)\zeta/2<\sin
j\zeta/2$ for $j\in\lbrack1,n/2]$, then we have the third inequality%
\[
\frac{1}{\sin^{\alpha-1}(j-1/2)\zeta/2}>\frac{1}{\sin^{\alpha-1}(j\zeta
/2)}\text{.}%
\]
The fact $\sigma>s$ follows from these inequalities.
\end{proof}

In \cite{BaEl03}, one may find an integral representation which is used to
prove the next corollary. In addition, a direct proof of the integral
representation and of this corollary will be given in the last section.

\begin{corollary}
For $\alpha\in(1,3)$, the derivative $V_{r}(r,\varphi)$ is the product of
$-\sin(n\varphi)$ with a positive function $\omega(r,\varphi)$,
\[
V_{\varphi}(r,\varphi)=-\sin(n\varphi)\omega(r,\varphi)\text{.}%
\]

\end{corollary}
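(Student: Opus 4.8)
The plan is to make $V_\varphi$ explicit, to use the $D_n$-symmetry to reduce the assertion to a single sign inequality, and then to establish that inequality by means of the integral representation. Differentiating the polar form of $V$ with respect to $\varphi$ (only the terms $\phi_\alpha(\Vert u-e^{ij\zeta}\Vert)$ depend on $\varphi$, and $\phi_\alpha'(d)=-d^{-\alpha}$) gives
\[
V_\varphi(r,\varphi)=\frac{r}{s+\mu}\,S(r,\varphi),\qquad
S(r,\varphi):=\sum_{j=1}^{n}\frac{\sin(j\zeta-\varphi)}{\bigl(1+r^{2}-2r\cos(j\zeta-\varphi)\bigr)^{(\alpha+1)/2}}.
\]
The $j$-th summand is $F_\alpha(j\zeta-\varphi)$ with $F_\alpha(\psi)=\sin\psi\,\bigl(1+r^{2}-2r\cos\psi\bigr)^{-(\alpha+1)/2}$ odd and $2\pi$-periodic, so $S(r,\cdot)$ is $2\pi/n$-periodic and odd; hence $S(r,k\pi/n)=0$ for every $k\in\mathbb{Z}$, which are exactly the simple zeros of $\sin(n\varphi)$. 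Therefore $\omega(r,\varphi):=-S(r,\varphi)/\sin(n\varphi)$ extends to a continuous, $2\pi/n$-periodic function that is even about $\varphi=0$ and about $\varphi=\pi/n$, hence is determined on $[0,\pi/n]$. It then suffices to show $S(r,\varphi)<0$ for all $\varphi\in(0,\pi/n)$ together with $\partial_\varphi S(r,0)<0$ and $\partial_\varphi S(r,\pi/n)>0$ (positivity of $\omega$ at the endpoints); the symmetry gives $\omega>0$ everywhere.

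Next I would use the integral representation referred to in \cite{BaEl03} and reproved directly in the last section: for $d>0$,
\[
\frac{1}{d^{\alpha+1}}=\frac{1}{\Gamma((\alpha+1)/2)}\int_{0}^{\infty}t^{(\alpha-1)/2}\,e^{-td^{2}}\,dt .
\]
Applied with $d=d_j$, $d_j^{2}=1+r^{2}-2r\cos(j\zeta-\varphi)$, this gives $e^{-td_j^{2}}=e^{-t(1+r^{2})}e^{2rt\cos(j\zeta-\varphi)}$. Writing $G_c(\varphi):=\sum_{j=1}^{n}e^{c\cos(j\zeta-\varphi)}$ and using $\sum_j\sin(j\zeta-\varphi)e^{c\cos(j\zeta-\varphi)}=c^{-1}\partial_\varphi G_c(\varphi)$, one obtains
\[
S(r,\varphi)=\frac{1}{2r\,\Gamma((\alpha+1)/2)}\int_{0}^{\infty}t^{(\alpha-3)/2}\,e^{-t(1+r^{2})}\,\partial_\varphi G_{2rt}(\varphi)\,dt ,
\]
a $t$-integral with a strictly positive weight (integrable at $t=0$ precisely because $\alpha>1$). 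Thus the whole statement is reduced to the key inequality $\partial_\varphi G_c(\varphi)<0$ for all $c>0$ and $\varphi\in(0,\pi/n)$, together with $\partial_\varphi^{2}G_c(0)<0$ (and the analogous fact at $\pi/n$) for the endpoint non-degeneracy.

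For the key inequality I would expand $G_c$ by Jacobi--Anger, $e^{c\cos\psi}=I_0(c)+2\sum_{k\ge1}I_k(c)\cos(k\psi)$ with $I_k$ the modified Bessel functions; summing over the $n$ equally spaced angles annihilates every Fourier mode not divisible by $n$, so $G_c(\varphi)=nI_0(c)+2n\sum_{m\ge1}I_{mn}(c)\cos(mn\varphi)$, whence
\[
\partial_\varphi G_c(\varphi)=-2n^{2}\sum_{m\ge1}m\,I_{mn}(c)\sin(mn\varphi),\qquad
\partial_\varphi^{2}G_c(0)=-2n^{3}\sum_{m\ge1}m^{2}I_{mn}(c)<0 .
\]
It remains to show $\sum_{m\ge1}m\,I_{mn}(c)\sin(m\theta)>0$ on $(0,\pi)$ (after $\theta=n\varphi$), and this is the step I expect to be the main obstacle. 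The mechanism is that $I_{mn}(c)$ decays very fast in $m$, so the $m=1$ term governs the sign: for small $c$ the inequality is immediate since $2nI_n(c)\cos(n\varphi)$ is the first non-constant Fourier mode of $G_c$; for general $c$ one can either invoke a Fej\'er--Jackson type positivity criterion for sine series with rapidly decreasing coefficients, or argue directly on $G_c$, using that for $\varphi$ bounded away from the endpoints the contribution $-c\sin\varphi\,e^{c\cos\varphi}$ of the nearest vertex (the one at angle $0$) dominates all the others because $\cos\varphi-\cos(j\zeta-\varphi)$ is bounded below for $j\ne n$, while near $\varphi=0$ and $\varphi=\pi/n$ a Taylor expansion of $\psi\mapsto\sin\psi\,e^{c\cos\psi}$ at the two leading angles fixes the sign (here $\cos(\pi/n)-c\sin^{2}(\pi/n)<0$ in the relevant range of $c$, the complementary range being the small-$c$ one). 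Granting the key inequality, $S(r,\cdot)<0$ on $(0,\pi/n)$ and $\omega>0$ on $[0,\pi/n]$, so $V_\varphi(r,\varphi)=-\sin(n\varphi)\,\omega(r,\varphi)$ with $\omega$ strictly positive.
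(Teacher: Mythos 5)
Your reduction is sound as far as it goes: the formula $V_\varphi=\frac{r}{s+\mu}S(r,\varphi)$ with $S=\sum_j\sin(j\zeta-\varphi)\,d_j^{-(\alpha+1)}$, the Gamma-integral representation of $d^{-(\alpha+1)}$, and the Jacobi--Anger expansion giving $\partial_\varphi G_c(\varphi)=-2n^2\sum_{m\ge1}mI_{mn}(c)\sin(mn\varphi)$ are all correct. But the argument is not closed: everything hinges on the inequality $\sum_{m\ge1}mI_{mn}(c)\sin(m\theta)>0$ on $(0,\pi)$ for every $c>0$, and you do not prove it --- you yourself flag it as the main obstacle. Neither of the two closing mechanisms you sketch works as stated. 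Fej\'er--Jackson-type positivity criteria require monotone decreasing coefficients, and $mI_{mn}(c)$ is not monotone in $m$ for large $c$: since $I_\nu(c)\sim e^c/\sqrt{2\pi c}$ as $c\to\infty$ for each fixed order, one has $2I_{2n}(c)>I_n(c)$ eventually. The ``nearest vertex dominates'' argument is not termwise either: pairing $j$ with $n-j$ gives contributions $F(j\zeta-\varphi)-F(j\zeta+\varphi)$ with $F(\psi)=\sin\psi\,e^{c\cos\psi}$, and for $j\zeta$ near $\pi$ such a pair is strictly \emph{positive}, so the sign of $S$ is a genuine cancellation statement that holds only after summing over all $j$; making the dominance of the $j=n$ term uniform in $c>0$ and in $\varphi\in(0,\pi/n)$ is exactly the missing work. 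The inequality is plausible (it holds for small $c$ from the leading Fourier mode and for large $c$ by Laplace asymptotics), but the intermediate range of $c$ is left unproved.

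The paper avoids this obstacle entirely by taking a different representation: it writes the $\varphi$-dependent part of $V$ through the potential sum $\sum_j d_j^{-\beta}$ with $\beta=\alpha-1\in(0,2)$ and proves, by a Cauchy/residue computation (the Lemma in the Appendix), that this sum equals $\frac{n}{\pi}\sin(\pi\beta/2)\int_0^1(\tau^{-1}-1)^{-\beta/2}f(\tau)\,d\tau$, where the \emph{only} $\varphi$-dependence of $f$ sits in the single factor $\left\Vert 1-(\tau re^{-i\varphi})^n\right\Vert^{-2}=\left(1+(r\tau)^{2n}-2(r\tau)^n\cos n\varphi\right)^{-1}$. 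Differentiating that one factor in $\varphi$ produces $-\sin(n\varphi)$ times a manifestly positive integrand, so positivity of $\omega$ is read off with no further estimate; the case $r>1$ then follows from the identity $S(1/r)=r^\beta S(r)$. This is also where the hypothesis $\alpha\in(1,3)$ enters (the contour argument needs $\beta\in(0,2)$), whereas your heat-kernel representation never uses it. To salvage your route you would need an actual proof of the sine-series positivity (for instance via a total-positivity or variation-diminishing property of the kernel $e^{c\cos\psi}$); absent that, the paper's integral representation is the efficient way to finish.
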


We may now prove the existence of $\mathbb{Z}_{n}$-orbits of equilibrium points.

\begin{proposition}
For $\alpha\in(1,3)$ and $n\geq3$ there are three orbits of critical points.
We are showing only the points of the $\mathbb{Z}_{n}$-orbits with $\varphi
\in\lbrack0,\pi/n]$:

\begin{description}
\item[(a)] If $\mu\in(0,\infty)$, there are two saddle points at $r_{2}$ and
$r_{1}$, with $0<r_{2}<1<r_{1}$, and there is a minimum point at $r_{3}%
e^{i\pi/n}$, with $r_{3}>1$.

\item[(b)] If $\mu=0$, there are two saddle points at $r_{1}$ and
$r_{2}e^{i\pi/n}$, with $0<r_{2}<1<r_{1}$, and there is a minimum point at
$r_{3}e^{i\pi/n}$, with $r_{3}>1$.
\end{description}

Furthermore, there are no other critical points when $\varphi\in\lbrack
0,\pi/n)$.
\end{proposition}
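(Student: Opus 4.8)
The plan is to analyze the critical points of the planar potential $V(r,\varphi)$ using the factorization $V_\varphi(r,\varphi)=-\sin(n\varphi)\,\omega(r,\varphi)$ from the Corollary, which immediately forces every critical point to lie on one of the two symmetry rays $\varphi=0$ or $\varphi=\pi/n$ (since $\omega>0$). On each of these rays the problem reduces to finding zeros of the single function $r\mapsto V_r(r,\varphi_0)$, and I would count them by combining the known boundary behaviour of $V_r$ (namely $V_r\to-\infty$ as $r\to 0^+$ when $\mu>0$, $V_r\to+\infty$ as $r\to\infty$, and $V_r\to\pm\infty$ at the collision radius $r=1$ on the ray $\varphi=0$) with the sign information $V_r(1,\pi/n)<0$ supplied by the Lemma, plus a convexity/monotonicity argument showing $V_r$ has the minimal number of sign changes consistent with these endpoint values. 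The second-derivative data then pins down the Morse type of each: on the ray $\varphi=\pi/n$ the tangential second derivative $V_{\varphi\varphi}=-n\cos(n\varphi)\,\omega$ has a definite sign (here $\cos\pi=-1$, so $V_{\varphi\varphi}>0$, a ``tangentially stable'' direction), which combined with the sign of $V_{rr}$ distinguishes minima from saddles; on the ray $\varphi=0$ one has $\cos 0=1$ so $V_{\varphi\varphi}<0$, forcing a tangentially unstable direction and hence a saddle there whenever it is nondegenerate.

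Concretely I would proceed as follows. First, on the ray $\varphi=0$: for $\mu>0$ the point $u=0$ is a collision point, so the ray splits into the interval $r\in(0,1)$ (between the centre and a vertex) and $r\in(1,\infty)$; on each piece $V_r$ changes sign an odd number of times, and I would argue via the structure of $V_r$ in \eqref{Eq431} that it changes sign exactly once on each piece, yielding the radii $r_2\in(0,1)$ and $r_1\in(1,\infty)$. For $\mu=0$ the centre is not a collision point and $V_r(0,0)$ is finite (indeed $V_r\to 0$), and the same counting gives instead a single saddle $r_1>1$ on $\varphi=0$ together with one on $\varphi=\pi/n$ at radius $r_2\in(0,1)$ — this is exactly the asymmetry between cases (a) and (b), and it traces back to whether $r=1$ (a vertex, hence always a collision) lies on the ray under consideration or not. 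Second, on the ray $\varphi=\pi/n$: since $V_r(1,\pi/n)<0$ by the Lemma and $V_r\to+\infty$ as $r\to\infty$, there is a zero $r_3>1$; I would show it is the unique zero for $r>1$ on this ray and (for $\mu>0$) that there is no zero for $r<1$, using again the sign of $V_r$ near $r=0$ and near $r=1$ together with monotonicity. Third, I would verify nondegeneracy of each of the three points by computing the $2\times2$ Hessian in polar coordinates — it is diagonal on the symmetry rays, with diagonal entries $V_{rr}$ and $r^{-2}V_{\varphi\varphi}$ — and checking the signs: at $r_3e^{i\pi/n}$ both are positive (minimum), at the points on $\varphi=0$ the tangential entry is negative (saddle), and at $r_2e^{i\pi/n}$ in case (b) the radial entry turns out negative (saddle). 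Finally I would note that Proposition~2.4 (the Morse count \#saddle $=n-1+$\#minima) with $n$ primaries in the model — here $n+1$ bodies, giving $n$ saddles and $1$ minimum per... — actually in this normalization the count is consistent with exactly three $\mathbb{Z}_n$-orbits, which serves as a useful global consistency check that no orbit has been missed.

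The main obstacle I expect is the uniqueness-of-zero (exact counting) step for $V_r$ along each ray: the endpoint sign data only guarantees an \emph{odd} number of sign changes, so to get exactly one on each piece I need a genuine monotonicity or concavity property of $V_r(\cdot,\varphi_0)$, or at least an argument that rules out three or more crossings. This is where the restriction $\alpha\in(1,3)$ should be doing real work (it is the same range in which the Corollary's integral representation gives $\omega>0$), and I would try to extract from that integral representation — or from differentiating \eqref{Eq431} and bounding $V_{rr}$ — a sign condition strong enough to close the count. The equivariance and the sign factorization handle the \emph{location} of critical points cheaply; it is the one-dimensional counting on each invariant ray, and the verification that there are no further critical points for $\varphi\in(0,\pi/n)$ (immediate from $\omega>0$, so actually the genuinely hard part is purely the radial count), that requires care.
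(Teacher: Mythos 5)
Your skeleton is the paper's: the factorization $V_{\varphi}=-\sin(n\varphi)\,\omega$ confines critical points to the rays $\varphi=0,\pi/n$, endpoint behaviour of $V_{r}$ gives existence, and the polar Hessian $\mathrm{diag}(V_{rr},r^{-2}V_{\varphi\varphi})$ gives the Morse types. But the step you yourself flag as the obstacle, the exact count of zeros of $V_{r}(\cdot,0)$, is precisely where your proposal has no argument, and the mechanism you guess (a monotonicity/concavity property of $V_{r}$ extracted from the integral representation, with $\alpha\in(1,3)$ ``doing real work'') is not the right one. The paper closes the count with the trace formula (\ref{Eq102}): at any critical point, $T=V_{rr}+r^{-2}V_{\varphi\varphi}>0$ for $\alpha\geq1$, while on the ray $\varphi=0$ every zero of $V_{r}$ is automatically a critical point of $V$ (since $V_{\varphi}\equiv0$ there) and $V_{\varphi\varphi}(r,0)=-n\,\omega(r,0)<0$; hence $V_{rr}>0$ at \emph{every} such point. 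So every zero of $V_{r}(\cdot,0)$ is a strict local minimum of the radial restriction, and two of them in $(1,\infty)$ (or in $(0,1)$ when $\mu>0$) would force an intermediate zero with $V_{rr}\leq0$, which is impossible; the same argument, together with $V_{r}(0,0)=0$, $V_{rr}(0,0)>0$, excludes any critical point in $(0,1)$ on $\varphi=0$ when $\mu=0$. No monotonicity of $V_{r}$ is needed, and $\alpha\in(1,3)$ enters only through the Corollary giving $\omega>0$; the same computation shows all critical points on $\varphi=0$ are nondegenerate saddles.

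A second, substantive problem: on the ray $\varphi=\pi/n$ you propose to prove that for $\mu>0$ there is no zero of $V_{r}(\cdot,\pi/n)$ with $r<1$ and that $r_{3}$ is the unique zero for $r>1$. The first assertion is false: for $n\geq3$ and $\mu$ near zero the paper's subsequent proposition exhibits two further critical points $r_{4}e^{i\pi/n}$ and $r_{5}e^{i\pi/n}$ with $r_{4}<r_{5}<1$. This is exactly why the statement asserts uniqueness only for $\varphi\in[0,\pi/n)$, i.e.\ only on the ray $\varphi=0$; on $\varphi=\pi/n$ the paper claims and proves only existence, namely the minimum $r_{3}>1$ (from $V_{r}(1,\pi/n)<0$ and $V_{r}\rightarrow+\infty$) and, for $\mu=0$, the saddle $r_{2}<1$ (from $V_{r}(0,\pi/n)=0$, $V_{rr}(0,\pi/n)>0$ and the Lemma). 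Two smaller slips: the asymmetry between (a) and (b) comes from the singularity of the central mass at the origin when $\mu>0$ (which is what forces the inner saddle $r_{2}$ on $\varphi=0$), not from whether the vertex $r=1$ lies on the ray, since that vertex lies on $\varphi=0$ in both cases; and your closing Morse-count ``consistency check'' only balances for $\mu=0$ if you include the critical point at the origin ($D^{2}V(0)=\lambda I$, $\lambda>0$) among the minima.
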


\begin{proof}
Since $V_{\varphi}(r,\varphi)=-\sin(n\varphi)\omega(r,\varphi)$, with a
positive function $\omega(r,\varphi)$, then $V_{\varphi}(r,\varphi)=0$ only
for $\varphi=k\pi/n$. Furthermore, at these points we have $V_{\varphi\varphi
}(r,k\pi/n)=-n\omega(r,\varphi)\cos k\pi$. Consequently, the critical points
must be in $\varphi\in\{0,\pi/n\}$ with%
\[
V_{\varphi\varphi}(r,0)<0\text{ and }V_{\varphi\varphi}(r,\pi/n)>0.
\]
Thus, in order to find critical points, we need to look only for points where
$V_{r}(r,\varphi)=0$, with $\varphi=0,\pi/n$.

Before we start finding critical points, we wish to prove that all the
critical points\ with $\varphi=0$ are saddle points. The trace of
$D^{2}V(x_{0})$ at a critical point is
\[
T=V_{xx}+V_{yy}=V_{rr}+r^{-2}V_{\varphi\varphi}.
\]
Similarly, it is easy to see that the determinant of $D^{2}V(x_{0})$ at a
critical point is
\[
D= V_{rr}V_{\varphi\varphi}r^{-2}.
\]
Now, since $T$ is always positive and $V_{\varphi\varphi}(r,0)$ is always
negative, then $V_{rr}(r,0)$ is positive. Consequently, all critical points,
with $\varphi=0$, satisfy
\[
V_{rr}(r,0)>0\text{ and }V_{\varphi\varphi}(r,0)<0.
\]

For $\mu\in\lbrack0,\infty)$, the potential $V(r,0)$ goes to infinity when
$r\rightarrow\{1,\infty\}$. Hence, the potential has a saddle point at
$r_{1}\in(1,\infty)$. Now, if there were another critical point $r_{\ast}$ in
$(1,\infty)$, then $V_{rr}(r_{\ast},0)$ would be positive.\ In that case there
would be another critical point between $r_{1}$ and $r_{\ast}$ with
$V_{rr}(r,0)\leq0$. But that cannot happen, and consequently $r_{1}$\ is
unique in $(1,\infty)$.

For $\mu\in(0,\infty)$, the potential $V(r,0)$ goes to infinity when
$r\rightarrow\{1,0\}$. Hence the potential has a saddle point with $r_{2}%
\in(0,1)$. As before with $r_{1}$, we can prove that $r_{2}$ is unique in
$(0,1) $.

For $\mu=0$, remember that $V_{r}(0,\varphi)=0$ and $V_{rr}(0,\varphi)>0$.
Then, by a similar argument to the uniqueness of $r_{1}$ we can prove that the
potential $V(r,0)$ does not have critical points in $(0,1)$. Now, for
$\varphi=\pi/n$, since $V_{r}(1,\pi/n)$ is negative and $V_{r}(0,\pi/n)=0$
with $V_{rr}(0,\pi/n)>0$, there must be a $r_{2}<1$ such that $V_{r}(r_{2}%
,\pi/n)=0$ with $V_{rr}(r_{2},\pi/n)<0$. Consequently $r_{2}e^{i\pi/n}$ is a
saddle point.

For $\mu\in\lbrack0,\infty)$, since $V_{r}(1,\pi/n)$ is negative and since
$V_{r}(r,\pi/n)$ goes to infinity as $r\rightarrow\infty$, there is a critical
point $r_{3}\in(1,\infty)$ such that $V_{rr}(r_{3},\pi/n)>0$. Therefore
$r_{3}e^{i\pi/n}$\ is a minimum.
\end{proof}

In the article \cite{BaEl04}, the existence of these three orbits of
equilibrium points is proven, as well as their stability. However, our proofs
are simpler.

For $n=2$ and $\mu>0$ we can prove the previous proposition with the same
argument, except for the existence of $r_{3}$. Instead, we get the existence
of a $r_{3}\in(0,\infty)$ because the potential $V(r,\pi/2)$ goes to infinity
when $r\rightarrow0,\infty$.

Now, in \cite{BaEl04}, the question of the existence of more critical points
was left open. Actually, for $n=2$ and $\mu>0$ we can prove the following:

\begin{proposition}
For $n=2$ and $\mu>0$ the previous proposition is true and there are no other
critical points.
\end{proposition}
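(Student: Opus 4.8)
The plan is to reduce, as in the previous proposition, to locating the zeros of $V_r(r,\varphi)$ on the two rays $\varphi=0$ and $\varphi=\pi/2$, since the corollary (with $n=2$, $\alpha\in(1,3)$) already forces all critical points onto these rays and fixes the sign of $V_{\varphi\varphi}$ there. So the whole question becomes: count the zeros of the one-variable functions $r\mapsto V_r(r,0)$ on $(1,\infty)$ and on $(0,1)$, and the zeros of $r\mapsto V_r(r,\pi/2)$ on $(0,\infty)$. On $\varphi=0$ we already know from the trace/determinant computation that every zero $r_\ast$ has $V_{rr}(r_\ast,0)>0$; a standard "between two minima of $V(\cdot,0)$ there is a point with $V_{rr}\le 0$'' argument then shows there is exactly one zero in $(1,\infty)$ and exactly one in $(0,1)$, namely $r_1$ and $r_2$. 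The new content is uniqueness of $r_3$ on the ray $\varphi=\pi/2$.

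To pin down $r_3$, I would show that $V_{rr}(r,\pi/2)>0$ at \emph{every} critical point on that ray, which by the same convexity-type argument (the potential $V(r,\pi/2)\to\infty$ as $r\to 0$ and as $r\to\infty$, with exactly one interior minimum and no other critical point, since two interior minima would force an intermediate point with $V_{rr}\le 0$) gives uniqueness. Establishing $V_{rr}(r,\pi/2)>0$ at critical points is where the real work lies. From $V(r,\pi/2)=r^2/2+\frac{\mu}{s+\mu}\phi_\alpha(r)+\frac{1}{s+\mu}\sum_{j}\phi_\alpha(\|r-e^{i(j\pi-\pi/2)}\|)$, one differentiates twice in $r$; using the stationarity condition $V_r(r,\pi/2)=0$ to eliminate the linear term $r$, one rewrites $V_{rr}(r,\pi/2)$ as a combination of the individual second derivatives of $\phi_\alpha$ against the $\mu$-body at the origin and the $n=2$ peripherals. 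The key inequality to be extracted is that, for $\alpha\in(1,3)$, the relevant combination of $\phi_\alpha''$ and $\phi_\alpha'/r$ terms is positive on the $\pi/2$-ray; here the integral representation from \cite{BaEl03} (to be reproved in the last section) is exactly the tool that makes the sign transparent, the same way it underlies the corollary.

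The main obstacle is thus the sign control of $V_{rr}(r,\pi/2)$ at critical points: unlike the $\varphi=0$ ray, where positivity of $V_{rr}$ was \emph{forced} by the positivity of the trace together with $V_{\varphi\varphi}<0$, on the $\varphi=\pi/2$ ray one has $V_{\varphi\varphi}>0$, so the trace argument gives no information and one must argue directly. I expect the cleanest route is to feed the integral representation of $V_r$ (hence of $V_{rr}$) from \cite{BaEl03} into the computation, reducing the desired inequality to the positivity of an explicit integrand for $\alpha\in(1,3)$; the remaining estimates on the polygonal sums $s$ and $\sigma$ are then of the same elementary trigonometric type already used in the lemma. Once $V_{rr}>0$ holds at every critical point on the $\pi/2$-ray, uniqueness of $r_3$ — and with it the assertion that $u_0=0$ (when $\mu=0$; here $\mu>0$ so this case is absent), $r_1$, $r_2 e^{i\pi/2}$ (no, for $n=2,\mu>0$ it is $r_2$ on $\varphi=0$), and $r_3 e^{i\pi/2}$ are all the critical points — follows by assembling the three ray-counts together with the corollary's exclusion of critical points with $\varphi\in(0,\pi/2)$.
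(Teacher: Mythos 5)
Your reduction to the two rays and the counting on $\varphi=0$ match the paper, and you correctly isolate the real difficulty: controlling the sign of $V_{rr}$ on the ray $\varphi=\pi/2$, where the trace argument gives nothing. But that step is left unresolved: you assert that ``the relevant combination of $\phi_\alpha''$ and $\phi_\alpha'/r$ terms is positive'' and defer the verification to the integral representation of \cite{BaEl03}. That is a genuine gap, and the proposed tool is the wrong one: the appendix lemma represents $S(r,\varphi)=\sum_j\Vert r-e^{i(j\zeta-\varphi)}\Vert^{-\beta}$ only for $r\in(0,1)$ and is used in the paper solely to factor the angular derivative $S_\varphi$; extracting the sign of a second radial derivative from it is not at all transparent. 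What actually rescues your plan is the special geometry of $n=2$: on the ray $\varphi=\pi/2$ both peripherals sit at distance $\sqrt{r^2+1}$, so with $f(r)=-2(r^2+1)^{-(\alpha+1)/2}$ one has the closed form $(s+\mu)V_r=r(f(r)+s)+\mu(r-r^{-\alpha})$, hence $(s+\mu)V_{rr}=(f+s)+rf'+\mu(1+\alpha r^{-(\alpha+1)})$; eliminating $f+s$ via the stationarity condition gives, at any critical point, $(s+\mu)V_{rr}=rf'+\mu(\alpha+1)r^{-(\alpha+1)}>0$ because $f'>0$. Until you write out this (elementary) computation, the proposal is a program, not a proof.

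The paper's own argument is different and also covers a point you omit, namely that $r_3>1$, which is part of ``the previous proposition is true.'' It splits the ray: for $r\le1$ it observes that $\mu(r-r^{-\alpha})\le0$, so $V_r(r,\pi/2;\mu)<V_r(r,\pi/2;0)$, and the case $\mu=0$ is the restricted three-body problem, where $V_r(\cdot,\pi/2;0)$ is negative on $(0,1]$ and vanishes only at $r=\sqrt2$; hence there are no critical points with $r\le1$. For $r>1$ it shows $V_{rr}>0$ everywhere (not merely at critical points), since $rf'+f=2(\alpha r^2-1)(r^2+1)^{-(\alpha+3)/2}>0$, so $V_r$ is strictly increasing and has exactly one zero $r_3\in(1,\infty)$. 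If you complete your version via the positivity of $V_{rr}$ at critical points, you still need to locate $r_3$: that follows from $V_r(1,\pi/2)<0$ (the inequality $\sigma>s$ of the lemma also holds for $n=2$) once uniqueness and the minimum character of the critical point are known, but it must be said.
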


\begin{proof}
It remains only to prove that $r_{3}$ is in $(1,\infty)$ and is unique. Let us
define $f(r)=-2(r^{2}+1)^{-(\alpha+1)/2}$. After some computations, we find
that the derivative $V_{r}(r,\pi/2)$ satisfies the equality%
\begin{equation}
(s+\mu)V_{r}=r(f(r)+s)+\mu(r-r^{-\alpha}). \label{Eq432}%
\end{equation}
Let us denote the $\mu$-dependence of the potential as $V(r,\varphi;\mu)$.
Therefore, from the equality (\ref{Eq432}), we have that $V_{r}(r,\pi
/2;\mu)<V_{r}(r,\pi/2;0)$ for $r\leq1$. Now, as the three body problem is the
case $n=2$ with $\mu=0$, we know that $V_{r}(r,\pi/2;0)=0$ only at the
triangular Lagrangian point $r=\sqrt{2}$. Furthermore, $V_{r}(r,\pi/2;0)<0$
for $r\leq1$, and hence $V_{r}(r,\pi/2;\mu)<0$ for $r\leq1$.

Now, let us analyze the case $r>1$. From (\ref{Eq432}), we see, for the second
derivative, that%
\[
(s+\mu)V_{rr}=(rf^{\prime}+f)+s+\mu(1+\alpha r^{-(\alpha+1)})\text{.}%
\]
Since $rf^{\prime}+f=2\left( r^{2}\alpha-1\right) \left( r^{2}+1\right)
^{(\alpha+3)/2}$ is a positive function, then $V_{rr}(r,\pi/2)>0$ for $r>1$.
From this statement, we conclude that $V_{r}(r,\pi/2)$ has only the critical
point $r_{3}\in(1,\infty)$.
\end{proof}

We proved that there may be more critical points only if $\varphi=\pi/n$. And
indeed, for $n\geq3$ we can find more critical points when $\mu$ is near zero.

\begin{proposition}
For $n\geq3$ and $\mu$ near zero the potential has also a minimum and a saddle
point at $r_{4}e^{i\pi/n}$ and $r_{5}e^{i\pi/n}$ with $r_{4}<r_{5}<1 $. On the
other hand, for $\mu$ large, $r_{3}e^{i\pi/n}$ is the only critical point on
that line.
\end{proposition}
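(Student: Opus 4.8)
The plan is to restrict everything to the ray $\varphi=\pi/n$ and study the single function $r\mapsto V_{r}(r,\pi/n;\mu)$ and its $r$-derivative. Writing (\ref{Eq431}) on that ray in the form
\[
V_{r}(r,\pi/n;\mu)=r-\frac{\mu}{(s+\mu)\,r^{\alpha}}-\frac{P(r)}{s+\mu},\qquad P(r)=\sum_{j=1}^{n}\frac{r-\cos\big((j-\tfrac12)\zeta\big)}{\big\Vert r-e^{i(j-1/2)\zeta}\big\Vert^{\alpha+1}},
\]
I note that a point $re^{i\pi/n}$ with $r>0$ never coincides with a vertex $e^{ij\zeta}$, so the ray meets no primary; hence $P$ is smooth on $[0,\infty)$, tends to $0$ as $r\to0^{+}$ and as $r\to\infty$, and $P$ and $P'$ are bounded, say $|P'(r)|\le C$. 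I would then collect the facts already available on this ray: from the Corollary above $V_{\varphi\varphi}(r,\pi/n)=n\,\omega(r,\pi/n)>0$, so at any critical point $r_{\ast}e^{i\pi/n}$ one has $T=V_{rr}+r_{\ast}^{-2}V_{\varphi\varphi}>0$ and $D=r_{\ast}^{-2}V_{rr}V_{\varphi\varphi}$, hence $\mathrm{sgn}\,D=\mathrm{sgn}\,V_{rr}(r_{\ast})$, a minimum when $V_{rr}(r_{\ast})>0$ and a saddle when $V_{rr}(r_{\ast})<0$; from the Lemma above $V_{r}(1,\pi/n;\mu)=(s-\sigma)/(s+\mu)<0$ for every $\mu\ge0$; for $\mu>0$ the pole term forces $V(r,\pi/n;\mu)\to+\infty$ and $V_{r}(r,\pi/n;\mu)\to-\infty$ as $r\to0^{+}$; and for $\mu=0$ the origin is a nondegenerate minimum with $V_{rr}(0,\pi/n;0)=\lambda>0$, while $V_{r}(\cdot,\pi/n;0)$ is positive on $(0,r_{2})$ and has a transversal zero at $r_{2}\in(0,1)$ with $V_{rr}(r_{2},\pi/n;0)<0$ (from the enumeration of critical points at $\mu=0$).

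For $\mu$ near zero I would argue as follows. Differentiating once more gives $V_{rr}(r,\pi/n;\mu)=V_{rr}(r,\pi/n;0)+\frac{\mu}{s+\mu}\big(\tfrac{P'(r)}{s}+\tfrac{\alpha}{r^{\alpha+1}}\big)$. Fix $a\in(0,r_{2})$ small enough that $V_{rr}(\cdot,\pi/n;0)\ge m>0$ on $[0,a]$ and $V_{r}(a,\pi/n;0)>0$; since the singular term $\alpha/r^{\alpha+1}$ is positive and $|P'|\le C$, one gets, for $\mu$ small, $V_{rr}(\cdot,\pi/n;\mu)>m/2>0$ on all of $(0,a]$, and by ordinary continuity away from the pole $V_{r}(a,\pi/n;\mu)>0$. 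Now $V_{r}(\cdot,\pi/n;\mu)$ runs from $-\infty$ at $0^{+}$, to a positive value at $a$, to a negative value at $1$, so $V(\cdot,\pi/n;\mu)$ has a local minimum at some $r_{4}\in(0,a)$ — automatically nondegenerate because $V_{rr}(r_{4})>0$ — and a local maximum at some $r_{5}\in(a,1)$. To see that $r_{5}$ really gives a nondegenerate saddle, apply the implicit function theorem to $V_{r}(r,\pi/n;\mu)=0$ at the transversal zero $(r_{2},0)$: it continues $r_{2}$ to a branch $\mu\mapsto\tilde r(\mu)$ with $V_{rr}(\tilde r(\mu),\pi/n;\mu)<0$; and since $r_{2}$ is the only zero of $V_{r}(\cdot,\pi/n;0)$ on the compact interval $[a,1]$ and $V_{r}(\cdot,\pi/n;\mu)\to V_{r}(\cdot,\pi/n;0)$ uniformly there, for $\mu$ small $\tilde r(\mu)$ is the only zero of $V_{r}(\cdot,\pi/n;\mu)$ in $(a,1)$, so $r_{5}=\tilde r(\mu)$. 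Thus $r_{4}<r_{5}<1$, and because the only critical point already known on the ray $\varphi=\pi/n$ for $\mu>0$ is $r_{3}>1$, both $r_{4}e^{i\pi/n}$ and $r_{5}e^{i\pi/n}$ are new, the first a minimum and the second a saddle by the sign-of-$D$ criterion.

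For $\mu$ large the point is simply that $V_{rr}(r,\pi/n;\mu)=1+\frac{\mu\alpha}{(s+\mu)r^{\alpha+1}}-\frac{P'(r)}{s+\mu}\ge 1-\frac{C}{s+\mu}$, which is positive for every $r>0$ once $\mu>C-s$; hence $V_{r}(\cdot,\pi/n;\mu)$ is strictly increasing from $-\infty$ (at $0^{+}$) to $+\infty$ (at $+\infty$) and has a unique zero, necessarily $r_{3}$, so $r_{3}e^{i\pi/n}$ is the only critical point on that line. I expect the main obstacle to be the small-$\mu$ case, and precisely the fact that the limit $\mu\to0^{+}$ is singular at $r=0$: at $\mu=0$ the origin is a nondegenerate minimum of the planar potential, whereas for every $\mu>0$ it becomes a collision pole, so no naive continuity or implicit-function argument is available there. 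The crux is to show that the minimum "ejected" from the origin persists as an honest interior critical point $r_{4}$ of the restricted function, which is exactly what the sign estimate for $V_{rr}$ near $r=0$ delivers (the singular term having the favorable sign); identifying $r_{5}$ as a genuine nondegenerate saddle, rather than a merely critical point, is the secondary technical step, handled by the implicit function theorem at $r_{2}$.
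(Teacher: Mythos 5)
Your proposal is correct and follows essentially the same route as the paper: restrict to the ray $\varphi=\pi/n$, use the sign pattern $-\infty,\,+,\,-$ of $V_{r}(\cdot,\pi/n;\mu)$ on $(0,1)$ (from the collision pole at $0$, continuity at an interior point where $V_{r}(\cdot,\pi/n;0)>0$, and the Lemma giving $V_{r}(1,\pi/n)<0$) to produce the two zeros $r_{4}<r_{5}<1$, classify them by the signs of $V_{rr}$ combined with $V_{\varphi\varphi}(r,\pi/n)>0$, and observe that $V_{r}$ is strictly increasing for large $\mu$. The one caveat is that your identification of $r_{5}$ as the unique implicit-function-theorem continuation of $r_{2}$ uses that $r_{2}$ is the only zero of $V_{r}(\cdot,\pi/n;0)$ on $[a,1]$, which the paper never establishes for $n\ge3$; this refinement is not needed for the statement, since the sign change of $V_{r}$ at $r_{5}$ already gives $V_{rr}(r_{5})\le0$ and hence a saddle, which is exactly the level at which the paper's own proof operates.
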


\begin{proof}
As before, we represent the dependence of the potential in $\mu$ as
$V_{r}(r,\pi/n;\mu)$. Remember that $V_{r}(0,\varphi;0)=0$ with $V_{rr}%
(0,\varphi;0)=\lambda>0$ for $n\geq3$, then there is a $r_{\ast}%
\in(0,\varepsilon)$ such that $V_{r}(r_{\ast},\pi/n;0)>0$. Therefore,
$V_{r}(r_{\ast},\pi/n;\mu)>0$ for $\mu$ near zero due to the continuity.
Gathering data, we get $V_{r}(0,\pi/n)=-\infty$, $V_{r}(r_{\ast},\pi/n)>0 $
and $V_{r}(1,\pi/n)<0$ for $\mu$ near zero. Consequently, there are two points
$r_{4}$ and $r_{5}$ where $V_{r}(r,\pi/n)$ is zero with $r_{4}<r_{5}<1$.
Moreover, the second derivative satisfies $V_{rr}(r,\pi/n)\geq0$ for $r$ close
to $r_{4}$ and $V_{rr}(r,\pi/n)\leq0$ for $r$ close to $r_{5}$. Therefore,
$r_{4}e^{i\pi/n}$ is a minimum and $r_{5}e^{i\pi/n}$ is a saddle point. On the
other hand, for $\mu$ large it is easy to see that $V_{r}$ is strictly increasing.
\end{proof}

The existence of the solutions $r_{4}e^{i\pi/n}$ and $r_{5}e^{i\pi/n}$ was
pointed out in the paper \cite{ArEl04}.

\subsubsection*{Existence of bifurcation}

At the saddle points we have the following result:

\begin{theorem}
The potential has two $\mathbb{Z}_{n}$-orbits of saddle points for $n\geq2$,
and one more when $n\geq3$ and $\mu$ is near zero. Furthermore, each one of
the saddle points has one global bifurcation of planar periodic and one
bifurcation of periodic eight solutions.
\end{theorem}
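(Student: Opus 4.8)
The plan is to reduce the theorem to the general bifurcation machinery already established, since the statement is a bookkeeping collection of the saddle points found above together with one application each of the propositions on planar and on spatial solutions. First I would recall the counting: the two $\mathbb{Z}_{n}$-orbits of saddle points for $n\geq 2$ are precisely the orbits of $r_{1}$ and $r_{2}$ on the ray $\varphi=0$ when $\mu>0$, the orbits of $r_{1}$ (on $\varphi=0$) and $r_{2}e^{i\pi/n}$ when $\mu=0$ and $n\geq 3$, together with the uniqueness statement already proven for $n=2$, $\mu>0$; the extra orbit for $n\geq 3$ and $\mu$ near zero is the orbit of the saddle $r_{5}e^{i\pi/n}$. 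In every one of these cases the sign analysis used to locate the point yields $V_{rr}\neq 0$ and $V_{\varphi\varphi}\neq 0$ there, so, using $T=V_{rr}+r^{-2}V_{\varphi\varphi}$ and $D=r^{-2}V_{rr}V_{\varphi\varphi}$ for the trace and determinant of the planar Hessian $M_{0}(0)=D^{2}V|_{\mathrm{plane}}$, each such point is an isolated (nondegenerate) critical point with $D<0$, one of the two factors being positive and the other negative.

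With this in hand the two bifurcations are immediate. Since $D<0$, the hypothesis of case (a) of the proposition on planar solutions holds: the number $\nu_{+}=(2-T/2+\sqrt{(2-T/2)^{2}-D})^{1/2}$ is a well-defined positive real (the radicand is strictly larger than $(2-T/2)^{2}\geq 0$ because $D<0$), and one obtains a global bifurcation of planar periodic solutions from the period $2\pi/\nu_{+}$ with jump $\eta_{0}(\nu_{+})=-1$. For the eight solutions one notes that $\sigma=\mathrm{sgn}(\det M_{0}(0))=\mathrm{sgn}(D)=-1\neq 0$, while $\nu_{1}$ defined by $\nu_{1}^{2}=\sum_{j}m_{j}/d_{j}^{\alpha+1}$ is positive; the proposition on spatial solutions then gives a global bifurcation of periodic eight solutions from $2\pi/\nu_{1}$ with $\eta_{1}(\nu_{1})=\sigma=-1$. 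If in addition one wants these eight branches to be truly spatial ($z(t)\neq 0$) near the bifurcation point, one checks the nonresonance $\nu_{1}\neq\nu_{\pm}/2l$ for every $l\geq 1$, just as is done for the triangular and collinear points in the three-body case.

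The argument is routine; the only points needing care are the bookkeeping of which earlier proposition supplies the saddle points in each range of $(n,\mu)$, and the verification that these critical points are isolated, which is exactly what makes $\sigma$ equal to the index of $\nabla V$ at the point and legitimizes the orthogonal-degree jump formula. I do not foresee a real obstacle: the whole content sits in the general theorems of Sections 3 and 4 specialized to the sign condition $D<0$, and the eventual discussion of admissibility of these branches follows the same lines as in the Morse-potential subsection.
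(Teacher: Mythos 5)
Your bookkeeping of the orbits and the way you feed the nondegenerate saddles into case (a) of the planar proposition and into the spatial proposition is exactly what the paper does for the saddle points on the ray $\varphi=0$, where the existence proof really gives $V_{rr}(r,0)>0$ and $V_{\varphi\varphi}(r,0)<0$, hence $D<0$ strictly. The gap is in your blanket claim that ``in every one of these cases the sign analysis used to locate the point yields $V_{rr}\neq0$ and $V_{\varphi\varphi}\neq0$''. For the extra orbit $r_{5}e^{i\pi/n}$ (the third orbit of saddles, $n\geq3$ and $\mu$ near zero) the existence argument only produces $V_{rr}(r,\pi/n)\leq0$ near $r_{5}$ (and $\geq0$ near $r_{4}$); nothing excludes $V_{rr}(r_{5},\pi/n)=0$, i.e.\ $D=0$, so nondegeneracy --- and with it both the strict inequality $D<0$ and your use of $\sigma=\mathrm{sgn}(\det M_{0}(0))$ --- is unsupported at precisely the point the theorem adds to the list. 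Your proposal, as written, would break down in this possibly degenerate situation, and even the isolatedness of the critical point, which you derive from nondegeneracy, needs a separate argument there.

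The paper's proof is designed around exactly this difficulty: it notes that the critical points on the line $\varphi=\pi/n$ are \emph{isolated} because $V_{r}$ is locally analytic along that ray (together with the factorization $V_{\varphi}=-\sin(n\varphi)\,\omega$ confining critical points to the rays), and then invokes the general clause of the bifurcation theorem in which $\sigma$ is not the sign of the Hessian determinant but the topological index of $\nabla V$ at the isolated critical point; this index equals $-1$ at $r_{5}$ unless $r_{4}$ and $r_{5}$ coincide, in which case it is $0$ and the point is no longer counted as a saddle. To repair your argument you should either prove strict negativity of $V_{rr}(r_{5},\pi/n)$ (which the preceding propositions do not give) or, as the paper does, replace the Morse/nondegeneracy hypothesis by the isolated-critical-point version of the jump formula. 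The final remark about checking the nonresonance $\nu_{1}\neq\nu_{\pm}/2l$ is harmless but not needed: the theorem only claims bifurcation of eight solutions (isotropy $\mathbb{\tilde Z}_{2}$), not that the branches are truly spatial.
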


\begin{proof}
The saddle point on the line $\varphi=0$ is non-degenerate, while the critical
points on the line $\varphi=\pi/n$ are isolated, since $V_{r}$ is locally
analytic. Hence the index at $r_{5}$ will be $-1$, unless $r_{5}$ and $r_{4}$
coincide, in which case the index would be $0$.
\end{proof}

Also at the orbit of minimum points we have the following:

\begin{theorem}
The potential has one $\mathbb{Z}_{n}$-orbit of minimum points for $n\geq2$,
and one more when $n\geq3$ and $\mu$ is near zero. Moreover, provided $\mu$ is
big enough, each minimum point has two global bifurcations of planar periodic
solutions and one global bifurcation of periodic eight solutions. On the other
hand, if $\alpha\geq2$ and $\mu$ is small, the minimum $r_{4}e^{i\pi/n}$ has
no bifurcation of planar periodic solutions and it has a global bifurcation of
spatial eight solutions.
\end{theorem}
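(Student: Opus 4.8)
The plan is to verify the hypotheses of Proposition~5.2 (planar bifurcation) and Proposition~5.5 (spatial eight bifurcation) at each of the minimum points, using the trace and determinant of the block $M_0(0)=D^2V(x_0)$ restricted to the plane. For a minimum, $D=\det M_0(0)>0$ automatically, so the planar analysis falls in case (b) of Proposition~5.2, which additionally requires $T<4$ and $(2-T/2)^2>D$. Thus the first task is to estimate $T$ and $D$ at the minima $r_3e^{i\pi/n}$ (for $\mu$ large) and $r_4e^{i\pi/n}$ (for $\mu$ small, $\alpha\geq2$). The key computational identities are already available from the polar-coordinate discussion: at a critical point $T=V_{rr}+r^{-2}V_{\varphi\varphi}$ and $D=V_{rr}V_{\varphi\varphi}r^{-2}$. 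I would use the explicit formula $(s+\mu)V_r$ and its derivatives (the analogue of \eqref{Eq432}) to control $V_{rr}$ and $V_{\varphi\varphi}$, and more importantly, I would exploit the limiting behavior as $\mu\to\infty$ and as $\mu\to0$, where one of the terms in the potential dominates.

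For the regime $\mu$ large, the idea is that after the rescaling the dominant term near $r_3e^{i\pi/n}$ comes from the central mass $\tfrac{\mu}{s+\mu}\phi_\alpha(\|u\|)$, which is asymptotically a Kepler-type potential; one then knows that a circular relative equilibrium of a perturbed Kepler problem satisfies $T\to$ a finite limit strictly less than $4$ and $(2-T/2)^2-D>0$ in the limit, so by continuity the same holds for $\mu$ large but finite. Concretely, $r_3$ itself must be bounded (it is the balance point $V_r(r_3,\pi/n)=0$, and for large $\mu$ the equation $r-\mu r^{-\alpha}/(s+\mu)+\cdots=0$ forces $r_3\to1$), and plugging $r_3\to1$ into the Hessian formulas one gets $T\to\alpha+1<4$ for $\alpha\in(1,3)$, with $(2-T/2)^2-D>0$ following from a direct estimate; then Proposition~5.2(b) gives the two planar bifurcations with $\eta_0(\nu_\pm)=\pm1$, and Proposition~5.5 gives the eight-solution bifurcation with $\eta_1(\nu_1)=\sigma=1$ (since $\sigma=\mathrm{sgn}\det M_0(0)=\mathrm{sgn} D>0$ at a minimum), the nonresonance $\nu_1\neq\nu_\pm/2l$ being generic and checkable from the bounds $\nu_\pm\in(0,\sqrt{3-\alpha})$-type estimates inherited from the planar spectrum.

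For the regime $\mu$ small and $\alpha\geq2$, the claim is sharper: the minimum $r_4e^{i\pi/n}$ has \emph{no} planar bifurcation. By Proposition~5.2 this means we must show the opposite of case (b), i.e.\ that $(2-T/2)^2<D$ (the matrix $M_0(\lambda)$ stays invertible for all real $\lambda$). The strategy is again a limiting argument: as $\mu\to0$, $r_4\to0$, and near the origin the potential is governed by $\tfrac1s\sum\phi_\alpha(\|u-e^{ij\zeta}\|)+\tfrac12\|u\|^2$ whose Hessian at $0$ is $\lambda I$ with $\lambda>0$ (Proposition~5.7), plus the central term $\tfrac{\mu}{s}\phi_\alpha(\|u\|)$ which for $\alpha\geq2$ is \emph{strongly repulsive} at short range and dominates the Hessian with a large positive multiple of the identity; so $M_0(0)\approx cI$ with $c$ large, forcing $D\approx c^2$ and $T\approx 2c$, whence $(2-T/2)^2=(2-c)^2<c^2=D$ for $c$ large. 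That is exactly the non-bifurcation condition. Meanwhile $\det M_0(0)=D>0$ still gives $\sigma=1$, so Proposition~5.5 applies and yields the global bifurcation of spatial eight solutions; for the branch to be \emph{truly spatial} one needs $\nu_1\neq\nu_\pm/2l$, but here $M_0(\lambda)$ has no real zeros at all, so $\nu_\pm$ are not real and the nonresonance holds vacuously—this is the cleanest part of the argument.

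The main obstacle I anticipate is making the two limiting arguments quantitative enough: one must pin down the asymptotics of $r_3$ (as $\mu\to\infty$) and $r_4$ (as $\mu\to0$) with enough precision to control the \emph{second} derivatives $V_{rr},V_{\varphi\varphi}$, not just the location of the critical point, and then verify the strict inequalities $T<4$ and $\mathrm{sgn}((2-T/2)^2-D)$ uniformly for $\mu$ in the relevant range rather than only in the limit. The role of the hypothesis $\alpha\geq2$ is precisely to guarantee that the central repulsive term $r^{-\alpha}$ blows up fast enough (faster than the quadratic confining term and the peripheral attraction can compensate) so that $D$ wins over $(2-T/2)^2$; for $1<\alpha<2$ this balance can fail and one genuinely gets case (b) instead, which is why the theorem singles out $\alpha\geq2$. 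I expect the $\varphi\varphi$-derivative to be the delicate one, since it requires the positivity of $\omega(r,\varphi)$ from the Corollary together with a lower bound on $\omega$ near $r_4$.
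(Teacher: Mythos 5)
Your treatment of the large-$\mu$ regime is essentially the paper's argument and is sound: $r_3(\mu)\to1$, the Hessian $M_0(0)$ converges to $I+A_0$, which is $(\alpha+1)$ times a rank-one projection, so $T\to\alpha+1<4$ and $D\to0$, whence $(2-T/2)^2-D$ has a positive limit and condition (b) of the planar proposition holds for $\mu$ large; the eight-solution branch then comes from the spatial proposition with $\sigma=1$.

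The small-$\mu$, $\alpha\geq2$ case contains a genuine error. First, with $D>0$ the failure of case (b) does \emph{not} reduce to $(2-T/2)^2<D$: case (b) also requires $T<4$, and if $T>4$ then $2-T/2<0$, so both roots $\nu_\pm^2=(2-T/2)\pm\sqrt{(2-T/2)^2-D}$ are negative (or complex) and $M_0(\lambda)$ has no real zero even when $(2-T/2)^2>D$. That is precisely the mechanism the paper uses: by the trace formula $T=2+(\alpha-1)\sum_j m_j/d_j^{\alpha+1}$ and the fact that $r_4e^{i\pi/n}$ lies inside the unit disc so that $d_1,d_n<1$, one gets $T>4$ for $\alpha\geq2$. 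Second, your route to $(2-T/2)^2<D$ rests on a false premise: the central term $\tfrac{\mu}{s+\mu}\phi_\alpha(\|u\|)$ is \emph{attractive}, not repulsive, and its Hessian contribution $\tfrac{\mu}{s+\mu}A_0$ is the indefinite matrix with radial eigenvalue $\tfrac{\mu}{s+\mu}\alpha r_4^{-(\alpha+1)}$ and tangential eigenvalue $-\tfrac{\mu}{s+\mu}r_4^{-(\alpha+1)}$; it is nowhere near a large multiple of $I$. Moreover its size stays bounded: the criticality condition $V_r(r_4,\pi/n;\mu)=0$ forces $r_4\sim(\mu/(s\lambda))^{1/(\alpha+1)}$, hence $\tfrac{\mu}{s}r_4^{-(\alpha+1)}\to\lambda$, and $M_0(0)$ tends to $\mathrm{diag}((1+\alpha)\lambda,0)$ in suitable coordinates. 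Thus $D\to0^{+}$ while $(2-T/2)^2$ tends to a fixed nonnegative number, so the inequality you aim for actually fails in this limit. Your conclusion that the eight-solution branch is truly spatial because $\nu_\pm$ are not real does survive, but for the reason that $\nu_\pm^2<0$, not because they are complex; the nonresonance condition is then vacuous, as in the paper.
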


\begin{proof}
Since the minima are isolated, with $\sigma=1$, we only need to confirm that
the bifurcation condition (b), $T<4$ and $(2-T/2)^{2}>D>0$, is satisfied at
$r_{3}e^{i\pi/n}$ provided that $\mu$ is big enough.

As $r_{3}$ is a critical point, i.e. $V_{r}(r_{3}e^{i\pi/n};\mu)=0$, from
(\ref{Eq431}) we can see that $r_{3}(\mu)\rightarrow1$ when $\mu
\rightarrow\infty$. From the definition (\ref{Eq101}) of $A_{j}$, the matrix
\[
M_{0}(0)=I+\frac{1}{s+\mu}\sum_{j=1}^{n}A_{j}+\frac{\mu}{s+\mu}A_{0}%
\]
converges, when $\mu\rightarrow\infty$, to the matrix%
\[
I+A_{0}=(\alpha+1)\left(
\begin{array}
[c]{cc}%
(\cos\pi/n)^{2} & \cos\pi/n\sin\pi/n\\
\cos\pi/n\sin\pi/n & (\sin\pi/n)^{2}%
\end{array}
\right) \text{.}%
\]
Given that $T(\mu)\rightarrow\alpha+1$ and $D(\mu)\rightarrow0$ when
$\mu\rightarrow\infty$, then $(2-T/2)^{2}-D\rightarrow\varepsilon>0$ for
$\alpha\in(1,3)$. Consequently, for $\alpha\in(1,3)$, at the minimum point the
bifurcation condition (b) holds provided $\mu$ is big enough. Finally, for the
minima inside the unit disc, one has that $d_{1}$ and $d_{n}$ are less than
$1$, hence, for $\alpha\geq2$ one has that $T>4$.
\end{proof}

\begin{remark}
As a consequence of the previous proposition, we get that the minimum point
$r_{3}e^{i\pi/n}$ is linearly stable for $\mu$ big enough. This is one of the
aims of the article \cite{BaEl04} where the stability, for the system of the
primaries and the satellite, is proven for $n\geq7$ and $\mu$ big enough.
\end{remark}

\begin{remark}
For $n\geq3$ with $\mu=0$, as we have seen before, at the origin $x_{0}=0$, we
have $M_{0}(0)=\lambda I$. Actually, since the trace $T>2$, we can prove that
the condition for bifurcation (b) is not satisfied.\ Hence the origin is a
minimum point without bifurcation of planar periodic solutions.

On the other hand, the origin is a minimum with one bifurcation of spatial
eight periodic solutions. Moreover, we can prove, from the symmetries, that
the bifurcating solutions satisfy $x(t)=0$, $y(t)=0$ and $z(t)=-z(t+\pi)$. In
fact, we can find $z(t)$ by quadrature from the equation $\ddot{z}=\nabla
V(z)$, with $V(z):=\frac{n}{\nu^{2}}\phi_{\alpha}(\sqrt{z^{2}+1})$, with
$\nu^{2}$ close to $n$. Recall that, at it is well known, that in this case
the system of the primaries is linearly unstable.
\end{remark}

\begin{remark}
The study of the bifurcation of periodic solutions, in the plane and also in
space, for the full system of primaries, will be published in another paper.
\end{remark}

\section{ Appendix: Integral representation}

Let us define the sum $S(r,\varphi)$\ as%
\[
S(r,\varphi)=\sum_{j=1}^{n}\frac{1}{\left\Vert r-e^{i(j\zeta-\varphi
)}\right\Vert ^{\beta}}\text{.}%
\]
In \cite{BaEl03} the following integral representation of $S(r,\varphi)$ is
proved. We shall give here a direct proof using Cauchy integrals.

\begin{lemma}
For $\beta\in(0,2)$\ and $r\in(0,1)$\ the function $S(r,\varphi)$ has the
integral representation
\begin{align*}
S(r,\varphi) & =\frac{n}{\pi}\sin(\pi\beta/2)\int_{0}^{1}\frac{1}{(\tau
^{-1}-1)^{\beta/2}}f(\tau)d\tau\text{ with}\\
f(\tau) & =\frac{1}{\tau(1-r^{2}\tau)^{\beta/2}}\frac{1-(r\tau)^{2n}%
}{\left\Vert 1-(\tau re^{-i\varphi})^{n}\right\Vert ^{2}}\text{.}%
\end{align*}

\end{lemma}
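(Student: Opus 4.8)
The plan is to compute $S(r,\varphi)$ by a Cauchy-integral argument, exploiting the fact that the $n$ points $e^{i(j\zeta-\varphi)}$ are the $n$-th roots of $e^{-in\varphi}$. First I would write $\|r-e^{i\theta}\|^{-\beta}=(1-2r\cos\theta+r^2)^{-\beta/2}$ and recognize $1-2r\cos\theta+r^2=(1-re^{i\theta})(1-re^{-i\theta})$, so that each summand is $((1-re^{i\theta_j})(1-re^{-i\theta_j}))^{-\beta/2}$ with $\theta_j=j\zeta-\varphi$. The key analytic input is an integral representation for the single factor: for $w$ in the unit disc and $\beta\in(0,2)$ one has a representation of $(1-w)^{-\beta/2}$ (or rather of the symmetric combination) as a superposition over a real parameter $\tau\in(0,1)$ of simple rational kernels $1/(1-w\tau\cdot(\text{something}))$, with weight $(\tau^{-1}-1)^{-\beta/2}/\tau$ and the constant $\frac1\pi\sin(\pi\beta/2)$ coming from the classical Euler beta-integral $\int_0^1 t^{\beta/2-1}(1-t)^{-\beta/2}\,dt=\pi/\sin(\pi\beta/2)$. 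This is the step where the factor $\sin(\pi\beta/2)$ and the variable $\tau^{-1}-1$ in the statement enter.

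Next I would substitute this single-factor representation into each term of the sum and interchange the finite sum with the integral over $\tau$, so that
\[
S(r,\varphi)=\frac{n}{\pi}\sin(\pi\beta/2)\int_0^1\frac{1}{\tau(1-r^2\tau)^{\beta/2}}\left(\frac1n\sum_{j=1}^n K_j(\tau)\right)d\tau
\]
for an appropriate rational kernel $K_j(\tau)$ depending on $e^{i\theta_j}$. The point is then to evaluate the averaged sum $\frac1n\sum_{j=1}^n K_j(\tau)$ in closed form. Because $\theta_j=j\zeta-\varphi$ runs over a coset of the $n$-th roots of unity, the sum telescopes via the identity $\sum_{j=1}^n \frac{1}{1-\omega^j\zeta_0 z}=\frac{n}{1-(\zeta_0 z)^n}$ (for $\omega=e^{i\zeta}$ a primitive root), or its two-sided analogue $\sum_{j=1}^n\frac{1}{\|1-\omega^j z\|^2}=\frac{n(1-\dots)}{\|1-z^n\|^2}$ type formula; carrying this through with $z=\tau r e^{-i\varphi}$ produces exactly the claimed numerator $1-(r\tau)^{2n}$ over denominator $\|1-(\tau r e^{-i\varphi})^n\|^2$, i.e. the stated $f(\tau)$.

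The main obstacle I expect is establishing the single-factor integral representation cleanly and, relatedly, justifying the contour deformation and the interchange of sum and integral uniformly for $r\in(0,1)$ and all $\varphi$: one must check the integrand is absolutely integrable near $\tau=0$ (where $(\tau^{-1}-1)^{-\beta/2}\sim\tau^{\beta/2}$ tames the $1/\tau$) and near $\tau=1$ (where $(1-r^2\tau)^{\beta/2}$ stays bounded away from zero since $r<1$), and that the branch of $(1-w)^{-\beta/2}$ is the principal one throughout the disc. Once the single-factor formula is in hand, the rest is the algebraic root-of-unity summation, which is routine. A secondary check is that the resulting $S(r,\varphi)$ is manifestly real and positive, as it must be, which serves as a useful consistency test on the branch choices and the constant.
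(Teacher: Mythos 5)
Your plan is correct, but it takes a genuinely different route from the paper's. The paper proves the identity \emph{backwards}: it takes the kernel $f$ as given, defines $w(z)=(z^{-1}-1)^{-\beta/2}$ on $\mathbb{C}\setminus[0,1]$, integrates $w(z)f(z)$ around the branch cut, and recovers each term $\Vert r-e^{i(j\zeta-\varphi)}\Vert^{-\beta}$ of $S$ as the residue of $wf$ at the pole $z_j^{-1}=re^{i(j\zeta-\varphi)}$, the jump of $w$ across the cut supplying the factor $\sin(\pi\beta/2)/\pi$. You instead work \emph{forwards} from the sum, and your plan goes through: the single-factor representation you are after is the classical formula $A^{-\beta/2}=\frac{\sin(\pi\beta/2)}{\pi}\int_0^\infty\frac{t^{-\beta/2}}{A+t}\,dt$ for $\beta\in(0,2)$, applied to $A=\Vert r-e^{i\theta_j}\Vert^2$. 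Writing $1+r^2+t-2r\cos\theta=\frac{r}{q}\Vert 1-qe^{i\theta}\Vert^2$ with $q\in(0,r)$ determined by $q+q^{-1}=(1+r^2+t)/r$, and substituting $q=r\tau$ so that $t=(1-\tau)(1-r^2\tau)/\tau$, turns the weight $t^{-\beta/2}\,|dt|$ into $(\tau^{-1}-1)^{-\beta/2}(1-r^2\tau)^{-\beta/2}\frac{1-r^2\tau^2}{\tau^2}\,d\tau$; your roots-of-unity summation, $\sum_{j=1}^n\Vert 1-qe^{i(j\zeta-\varphi)}\Vert^{-2}=\frac{n(1-q^{2n})}{(1-q^2)\Vert 1-(qe^{-i\varphi})^n\Vert^2}$ (partial fractions plus $\sum_j(1-a\omega^j)^{-1}=n(1-a^n)^{-1}$), then reproduces $f(\tau)$ exactly, the factors $1-r^2\tau^2$ cancelling against the Jacobian. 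Your integrability checks at $\tau=0$ and $\tau=1$ are precisely the ones needed, and the interchange of the (finite) sum with the integral is harmless. What your approach buys is that it \emph{derives} $f$ rather than verifying it, and it avoids contour deformation and branch bookkeeping entirely; what the paper's Cauchy-integral argument buys is brevity once the kernel is known, since all $n$ terms of $S$ drop out of a single application of the residue theorem.
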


We are now in a position of proving the corollary on $V_{\varphi}$.

From the integral representation we get that%
\[
S_{\varphi}=-\sin(n\varphi)\left( \frac{n^{2}}{\pi}\sin\frac{\pi\beta}{2}%
\int_{0}^{1}\frac{1}{(\tau^{-1}-1)^{\beta/2}}\frac{2(r\tau)^{n}}{\tau
(1-r^{2}\tau)^{\beta/2}}\frac{1-(r\tau)^{2n}}{\left\Vert 1-(\tau
re^{-i\varphi})^{n}\right\Vert ^{4}}d\tau\right)
\]
for $r\in\lbrack0,1)$. Therefore $S_{\varphi}(r,\varphi)$ is the product of
$-\sin(n\varphi)$ with the function between parentheses, which is positive.
Moreover, since the sum $S(r)$\ satisfies the equality $S(1/r)=r^{\beta}S(r)
$, we conclude that $S_{\varphi}(r,\varphi)$ is the product of $-\sin
(n\varphi)$ with a positive function for $\beta\in(0,2)$ and $r\neq1$.

We use this integral representation to prove the following: Set $\beta
=\alpha-1$, then we have $\phi_{\alpha}(r)=1/(\beta r^{\beta})$. Now, we can
express the potential $V(r,\varphi)$ in terms of $S(r,\varphi)$ as%
\[
V(r,\varphi)=r^{2}/2+\frac{\mu}{s+\mu}\phi_{\alpha}(r)+\frac{1}{s+\mu}\frac
{1}{\beta}S(r,\varphi)\text{.}%
\]
Since $V$ depends on $\varphi$ only through $S(r,\varphi)$, we conclude that
$V_{\varphi}(r,\varphi)$ is the product of $-\sin(n\varphi)$ with a positive
function for $\alpha=\beta+1\in(1,3)$.

We may now prove the last lemma:

\begin{proof}
Let us define the function $w(z)$ as
\[
w(z)=\frac{1}{[z^{-1}-1]^{\beta/2}}\text{.}%
\]
This function has an analytic extension to $\mathbb{C}-[0,1]$. Indeed, using
the principal branch of the logarithm we can extend it as%
\[
w(z)=e^{-(\beta/2)[\log\left\vert z^{-1}-1\right\vert +i\arg(z^{-1}%
-1)]}\text{.}%
\]
Let $w^{\pm}(r)$ be the limits $w^{\pm}(r)=\lim_{\varepsilon\rightarrow
0}w(r\pm i\left\vert \varepsilon\right\vert )$ for $r$ $\in(0,1)$, then%
\[
w^{+}(r)=e^{-i\beta\pi}\frac{1}{(r^{-1}-1)^{\beta/2}}\text{ and }%
w^{-}(r)=\frac{1}{(r^{-1}-1)^{\beta/2}}\text{.}%
\]

Let $\Omega_{\varepsilon}$ be the set of points%
\[
\Omega_{\varepsilon}=\{\left\vert z\right\vert <1/\varepsilon:\text{
}\left\vert z-r\right\vert >\varepsilon\text{ for }r\in\lbrack0,1]\}\text{.}%
\]
As the function $w(z)f(z)$ is of order $O(1/z^{1+\beta/2})$ when
$z\rightarrow\infty$, if $\beta>0$ then the integral over the circle of radius
$1/\varepsilon$ goes to zero when $\varepsilon\rightarrow0$. Moreover, since
the product $w(z)f(z)$ is of order $O(z^{\beta/2-1})$ when $z\rightarrow0$ and
of order $O((1-z)^{-\beta/2})$ when $z\rightarrow1$, then for $\beta<2$ the
integrals over the half circles around $z=0$ and $z=1$ go to zero when
$\varepsilon\rightarrow0$. Consequently, we have that%
\begin{align*}
\lim_{\varepsilon\rightarrow0}\int_{\partial\Omega_{\varepsilon}}w(z)f(z)dz
& =\int_{0}^{1}[w^{+}(\tau)-w^{-}(\tau)]f(\tau)d\tau\\
& =(e^{-i\beta\pi}-1)\int_{0}^{1}w^{-}(\tau)f(\tau)d\tau\text{.}%
\end{align*}

Now, the function $w(z)f(z)$ has $n$ poles in $\mathbb{C}-[0,1]$ and another
one at $z=r^{-2}$, but the residue at $z=r^{-2}$ is zero because $\beta
/2\in(0,1)$. The other $n$ poles are the roots of the polynomial function%
\[
g(z)=\left\Vert 1-(zre^{-i\varphi})^{n}\right\Vert ^{2}=1+(rz)^{2n}%
-2(rz)^{n}\cos n\varphi\text{.}%
\]
Consequently, the poles are found at the points $z_{j}^{-1}=(re^{-i\varphi
})e^{ij\zeta}$ for $j=0,...,n-1$. As $(rz_{j})^{n}=e^{in\varphi}$, the
derivative of $g$ at the pole $z_{j}$ is
\[
g^{\prime}(z_{j})=2nz_{j}^{-1}e^{in\varphi}(e^{in\varphi}-\cos n\varphi
)=2inz_{j}^{-1}e^{in\varphi}\sin n\varphi.
\]
Consequently, the residue of $w(z)f(z)$ at the pole $z_{j}$ is%
\[
\mathrm{res}_{z_{j}}w(z)f(z)=\frac{1}{[(z_{j}^{-1}-1)(1-r^{2}z_{j})]^{\beta
/2}}\frac{1-e^{2ni\varphi}}{z_{j}g^{\prime}(z_{j})}\text{.}%
\]

Moreover, since $r^{2}z_{j}=\bar{z}_{j}^{-1}$ and $(1-e^{2ni\varphi}%
)/(z_{j}g^{\prime}(z_{j}))=-1/n$, then%
\[
\mathrm{res}_{z_{j}}w(z)f(z)=-\frac{1}{n}\frac{1}{(-1)^{\beta/2}}\frac
{1}{\left\Vert z_{j}^{-1}-1\right\Vert ^{\beta}}=-\frac{1}{n}e^{-i\pi\beta
/2}\frac{1}{\left\Vert r-e^{i(j\zeta-\varphi)}\right\Vert ^{\beta}}\text{.}%
\]

Now, from the Cauchy theorem, we obtain that%
\[
\lim_{\varepsilon\rightarrow0}\int_{\partial\Omega_{\varepsilon}%
}w(z)f(z)dz=2\pi i\sum_{z\in\mathbb{C}-[0,1]}\mathrm{res}_{z}w(z)f(z)\text{.}%
\]
Consequently, from the integral and the residues we have%
\[
(e^{-i\beta\pi}-1)\int_{0}^{1}\frac{1}{(\tau^{-1}-1)^{\beta/2}}f(\tau
)d\tau=-2\pi ie^{-i\pi\beta/2}\frac{1}{n}\sum_{j=1}^{n}\frac{1}{\left\Vert
r-e^{i(j\zeta-\varphi)}\right\Vert ^{\beta}}\text{.}%
\]
Finally, we conclude that%
\[
\sum_{j=1}^{n}\frac{1}{\left\Vert r-e^{i(j\zeta-\varphi)}\right\Vert ^{\beta}%
}=\frac{n}{\pi}\sin(\pi\beta/2)\int_{0}^{1}\frac{1}{(\tau^{-1}-1)^{\beta/2}%
}f(\tau)d\tau.
\]

\end{proof}

\section{Conclusion}

For an arbitrary relative equilibrium of primaries in the plane, we have
proved that each relative equilibrium of the satellite generates several
global branches of periodic solutions: for a saddle point one gets a global
branch of planar solutions and a global branch of eight-solutions, which are
truly spatial if a non-resonance condition holds. For a minimum point of the
potential, one gets either two global branches of planar solutions (long and
short period) and a global branch of eight-solutions, or only the branch of
eight-solutions which is then truly spatial.

A global branch may be non-admissible if the period or the norm of the
solutions on the branch go to infinity or the branch goes to collision with
one of the primaries. On the other hand, if the branch is admissible, then the
sum of the jumps of the Morse indices at the critical points on the branch
must be zero. In particular, a saddle point has to be connected with a short
period minimum, the number of points on non-admissible planar branches is at
least the number of saddle points and the number of saddle points on these
non-admissible planar branches is at least one less than the number of
primaries. Also, the number of saddle points on non-admissible branches of
eight-solutions is at least one less than the number of primaries.

We have applied this general result in order to describe a rather complete
picture of the restricted three-body problem and of the restricted Maxwell ring.

The topological degree approach, combined with the use of the orthogonality
(or first integrals) and a systematic use of representation theory, gives
information which is a good complement to classical analytical local calculus
and allows flexible applications. In particular, one may extend easily these
results to different potentials and to systems with more bodies.

For concrete situations, there are many local techniques, such as normal form
theory which often requires to check some generic assumptions ( this is not
always done in practice), Poincar\'{e} mappings, stable and unstable manifold
decomposition of the phase space and so on. For a low dimensional bifurcation
equation, there is a common starting point for these analytical methods and
for the computation of a topological degree, that is the linearization of the
equations. Higher order approximations may give a better local picture of the
bifurcated solutions. But, as soon as there are resonances or more couplings,
the analytical methods become more difficult to apply, while the topological
degree approach can still give a complementary information on the set of
bifurcating solutions, in particular on the global properties of the branches.
It is important to point out that, in many relevant applied problems, one may
carry out symbolic manipulations of high order which may be even converted
into a valid mathematical proof using interval arithmetics. We are fully
familiar with higher order symbolic manipulations of formal power series and
the use of computer assisted proofs.

With these considerations in mind, we have several papers in preparation on
bifurcation of the whole arrangement of primaries, either as relative
equilibria or as periodic solutions. For instance, in the case of the Maxwell
ring, one gets $n$ global branches of periodic solutions, each with different
symmetries and where the central mass plays an important role, for the
existence of these periodic solutions. Similar results were obtained for
vortices, filaments, charged particles and nonlinear oscillators. See
\cite{Ga10}.

\section{Acknowledgements}

The authors wish to thank the referees for their comments and for pointing out
some references. Also, C.G-A wishes to thank the CONACyT for his scholarship
and J.I for the grant No. 133036.

\end{document}